\newcommand{\Span}[1]{\left<#1\right>}
\newcommand{\Hilb}[2]{#1\text{-Hilb}(#2)}
\newcommand{\plus}{(\!+\!)}
\newcommand{\minus}{(\!-\!)}
\newcommand{\Mtheta}{\mathcal{M}_{d,\theta}(Q,R)}
\newcommand{\C}{\mathbb C}
\newcommand{\Z}{\mathbb Z}
\newcommand{\PP}{\mathbb P}
\newcommand{\LL}{\mathcal L}
\newcommand{\FF}{\mathcal F}
\newcommand{\RR}{\mathcal R}
\newcommand{\ZZ}{\mathcal Z}
\newcommand{\OO}{\mathcal O}
\DeclareMathOperator{\GL}{GL}
\DeclareMathOperator{\SL}{SL}
\DeclareMathOperator{\Irr}{Irr}
\DeclareMathOperator{\Hom}{Hom}
\DeclareMathOperator{\diag}{diag}
\DeclareMathOperator{\D}{D}
\DeclareMathOperator{\SO}{SO}
\DeclareMathOperator{\End}{End}
\DeclareMathOperator{\Pic}{Pic}
\DeclareMathOperator{\rk}{rank}
\tikzset{vertex/.style={circle,fill=black,inner sep=1pt,outer sep=3pt},
		gap/.style={inner sep=0.5pt,fill=white}}
\def\hex#1#2{
  	\draw #1 #2 +(30:0.7) \foreach \a in {90,150,210,270,330} { -- +(\a:0.7) } -- cycle;
% 	\draw[fill=white] #1 #2 \foreach \a in {0,120,240} { +(\a:1) circle (1.2mm) } ;
%  	\draw[fill=black] #1 #2 \foreach \a in {60,180,300} { +(\a:1) circle (1.2mm) } ;
	}
\def\r{1.212435}
\newtheorem{theorem}{Theorem}
\numberwithin{defn}{section}
\newtheorem{prop}{Proposition}\numberwithin{defn}{section}
\numberwithin{defn}{section}
\newtheorem{exa}{Example}\numberwithin{exa}{section}
\newtheorem{rem}{Remark}\numberwithin{exa}{section}
\begin{document}
%\pagewiselinenumbers

\title[]{On Reid's recipe for non abelian groups}
% {\normalsize\rm (\today)}}

%    Information for first author
\author[A. Nolla de Celis]{\'Alvaro Nolla de Celis}
%    Address of record for the research reported here
\address{Departamento de Did\'acticas Espec\'ificas. \\Facultad de Formaci\'on del Profesorado y Educaci\'on. \\Universidad Aut\'onoma de Madrid. \\Tom\'as y Valiente, 3. \\28049, Madrid.}
%    Current address
%\curraddr{Department of Mathematics and Statistics,
%Case Western Reserve University, Cleveland, Ohio 43403}
\email{alvaro.nolla@uam.es}
%    \thanks will become a 1st page footnote.

%%%    General info
\subjclass[2010]{}

\begin{abstract}
This paper presents two new explicit examples of Reid's recipe for non-abelian groups in $\SL(3,\C)$, namely the dihedral group $\mathbb{D}_{5,2}$ and a trihedral group of order 39.
\end{abstract}

\maketitle

\section{Introduction}

If $G$ is a finite subgroup of $\SL(2,\C)$, the {\em classical McKay correspondence} establish a one to one bijection between non-trivial irreducible representations of $G$ and the irreducible components of the exceptional locus $E$ in the minimal resolution $Y$ of the du Val singularity $\C^2/G$. It is well known that in this case the exceptional locus $E\subset Y$ consists of a chain of rational curves $E_i\cong\PP^1$ with $E_i^2=-2$ and intersection graph of ADE type. 

On the other hand, McKay \cite{McKay} observed that if we take the set of irreducible representations of $G$ to be $\Irr G=\{\rho_0,\rho_1,\ldots,\rho_n\}$ and $V:G\to\SL(2,\C)$ the natural representation, then the {\em McKay graph} formed by a vertex for each $\rho\in\Irr G$ and $a_{ij}$ edges between $\rho_i$ and $\rho_j$ such that $V\otimes\rho_i=\sum_{j=0}^na_{ij}\rho_j$, has the same (extended) ADE type. Thus, the correspondence can be stated as the bijection: 
\begin{equation}\label{McKay}
\text{$\{$Irreducible components $E_i\subset Y \} \longleftrightarrow \{$Non-trivial $\rho_i\in\Irr G\}$}
\end{equation}

\begin{exa} Let $G=D_4=\Span{\alpha=\left(\begin{smallmatrix}i&0\\0&-i\end{smallmatrix}\right), \beta=\left(\begin{smallmatrix}0&1\\-1&0\end{smallmatrix}\right)}$ the binary dihedral group of order 8 in $\SL(2,\C)$. The group has five irreducible representations: four 1-dimensional $\rho_0^+$ (trivial), $\rho_0^-$, $\rho_2^+$, $\rho_2^-$; and the 2-dimensional representation $\rho_1\cong V$. The minimal resolution $\pi:Y\to\C^2/G$ consists of four rational curves with intersection graph of the exceptional locus $E=\pi^{-1}(0)$ shown in Figure \ref{exa:D4:group} (left). 

On the other hand, the McKay graph obtained in $\Irr G$ is of ADE type (extended) $\overline{D}_4$, with coincides with the resolution graph if we don't consider the trivial representation. See Figure \ref{exa:D4:group} (right).

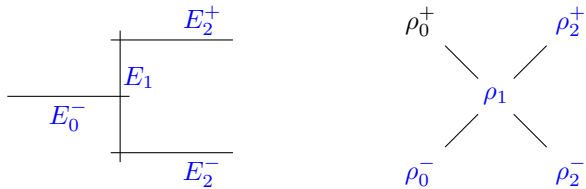
\begin{figure}[htbp]
\begin{center}
\begin{tikzpicture}

\node at (0,0) {
\begin{tikzpicture}[scale=0.5]

\draw (0,1.5) to 
	node[below, inner sep=1.5pt, blue] {$E_0^-$}(3.25,1.5);
\draw (3,-0.25) to 
	node[below, pos=0.6, inner sep=10pt, blue] {} (3,3.25);
\node[blue] at (3.5,2) {$E_1$};
\draw (2.75,3) to 
	node[above, near end, inner sep=1.5pt, blue] {$E_2^+$} (6,3);
\draw (2.75,0) to 
	node[below, near end, inner sep=1.5pt, blue] {$E_2^-$} (6,0);
\end{tikzpicture}
};

\node at (5,0)
{\begin{tikzpicture}
 \node (0u) at (0,1) {$\rho_0^+$};
 \node[blue] (0d) at (0,-1) {$\rho_0^-$};
 \node[blue] (1) at (1,0) {$\rho_1$};
 \node[blue] (2u) at (2,1) {$\rho_2^+$};
 \node[blue] (2d) at (2,-1) {$\rho_2^-$};
\draw [-] (0u) --  (1);
\draw [-] (0d) -- (1);
\draw [-] (1) -- (2u);
\draw [-] (1) -- (2d);
\end{tikzpicture}};

\end{tikzpicture}
\vspace{-0.25cm}
\caption{Graph of the exceptional locus $E\subset Y$ (left) and the McKay graph for $D_4\subset\SL(2,\C)$ of (extended) type $D_4$.}
\label{exa:D4:group}
\end{center}
\end{figure}

\end{exa}

\begin{rem} The choice of notation $\rho_0^\pm$ and $\rho_2^\pm$ comes from the fact that irreducible representations of $D_4$ can be induced from the irreducible representations of the cyclic group $A_4=\Span{\alpha}$. In this group $\Irr A_4=\{\rho_0,\rho_1,\rho_2,\rho_3\}$ where $\rho_i(\alpha)=i^k$ for $i=1,\ldots,3$. The action of $D_4/A_4\cong\Z/2\Z$ on $\Irr A_4$ fixes $\rho_0$ and $\rho_2$, producing $\rho_0^\pm$ and $\rho_2^\pm$, and the orbit $\{\rho_1,\rho_3\}$ produces the 2-dimensional representation $\rho_1$. Similar situation and notation is present in the examples of Sections \ref{exa:Dn} and \ref{exa:T}.
\end{rem}

In this sense, we can assign a non-trivial irreducible representation $\rho\in\Irr G$ to every component $E_\rho\subset Y$ in the minimal resolution, so the McKay correspondence reflects how the geometry of $Y$ is determined by the representation theory of $G$.  

The geometric interpretation of the correspondence was done by Gonzalez-Springer and Verdier in \cite{GSV} as follows: for each nontrivial $\rho_i\in\Irr G$ there is a vector bundle $\RR_{i}$ on $Y$ such that 
\begin{equation}\label{c1GSV}
\deg(c_1(\RR_{i}|_{E_j}))=\delta_{ij}, \hspace{1cm} j=1,\ldots,n.
\end{equation}

The bundles $\RR_i$ can be constructed in the following way. From the work of Ito and Nakamura \cite{IN} it is know that the $G$-equivariant Hilbert scheme $\Hilb{G}{\C^2}$, the fine moduli space of scheme-theoretic orbits or {\em $G$-clusters}, is the minimal resolution $Y$. Recall that a {\em $G$-cluster} is a $G$-invariant 0-dimensional subscheme $\ZZ$ such that $\OO_\ZZ\cong\C[G]$ the regular representation as $G$-modules. The interpretation of $Y\cong\Hilb{G}{\C^2}$ as a moduli space allows us to consider the tautological bundle $\RR$ on $Y$, where the fibre over a point $y\in Y$ is $H^0(Z(y),\OO_{Z(y)})$ where $Z(y)$ is the orbit parametrised by $y$. By definition of $G$-cluster $H^0(Z(y),\OO_{Z(y)})\cong\C[G]\cong\bigoplus_{\rho\in\Irr G}\rho^{\dim\rho}$. This fact imply that $\RR$ decomposes as a direct sum of globally generated vector bundles: 
\[
\RR=\bigoplus_{i}\RR_i\otimes\rho_i
\]
where $\RR_i=\Hom_G(\rho_k,\RR)$ are the {\em tautological bundles} of $Y$.

Notice that $\RR_i\in\Pic Y$, and $\RR_{0}\cong\OO_Y$ generates $H^0(Y,\Z)$. Also, since $E_i$ for $i>0$ form a basis of $H_2(Y,\Z)$, dual to a basis of $H^2(Y,\Z)$, we can use the Gonzalez-Springer and Verdier result to state the {\em integral McKay correspondence} as the bijection: 
\begin{equation}\label{intMcKay}
\text{$\{$Irreducible representations of $G\}\longleftrightarrow$ basis of $H^*(Y,\Z)$}
\end{equation}

\begin{exa}\label{exa:D4} We now illustrate explicitly the correspondence (\ref{intMcKay}) continuing with the group $G=D_4$, and we introduce some notation that will be present in following examples.

By \cite{INj} it is known that we can interpret $\Hilb{D_4}{\C^2}$ as the moduli space $\Mtheta$ of $\theta$-stable representations of dimension vector $d=(\dim\rho_i)_{\rho_i\in\Irr D_4}=(1,1,1,1,2)$ of the McKay quiver $Q$ with suitable relations $R$ (see Figure \ref{exa:D4:quiver}), for a particular choice of generic parameter $\theta^0$. 

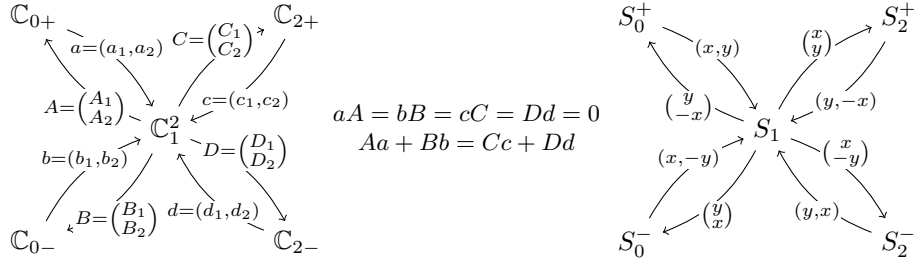
\begin{figure}[htbp]
\begin{center}
\begin{tikzpicture}
\node (n1) at (0,0) 
{
\begin{tikzpicture} [bend angle=45, looseness=1]
\node (0m) at (0,0)  {$\small \C_{0-}$};
\node (0p) at (0,3)  {$\small \C_{0+}$};
\node (2p) at (3.5,3)  {$\small \C_{2+}$};
\node (2m) at (3.5,0)  {$\small \C_{2-}$};
\node (1) at (1.75,1.5) {$\small \C^2_{1}$};
\draw [->,bend left=20] (0p) to node[above,gap]  {$\scriptstyle a=(a_1,a_2)$} (1);
\draw [->,bend left=20] (1) to node[below,gap]{$\scriptstyle A=\left(\!\begin{smallmatrix}A_1\\A_2\end{smallmatrix}\!\right)$} (0p);
\draw [->,bend left=20] (0m) to node[above,gap]  {$\scriptstyle b=(b_1,b_2)$} (1);
\draw [->,bend left=20] (1) to node[below,gap]{$\scriptstyle B=\left(\!\begin{smallmatrix}B_1\\B_2\end{smallmatrix}\!\right)$} (0m);
\draw [->,bend left=20] (2p) to node[below,gap]  {$\scriptstyle c=(c_1,c_2)$} (1);
\draw [->,bend left=20] (1) to node[above,gap]{$\scriptstyle C=\left(\!\begin{smallmatrix}C_1\\C_2\end{smallmatrix}\!\right)$} (2p);
\draw [->,bend left=20] (2m) to node[below,gap]  {$\scriptstyle d=(d_1,d_2)$} (1);
\draw [->,bend left=20] (1) to node[above,gap]{$\scriptstyle D=\left(\!\begin{smallmatrix}D_1\\D_2\end{smallmatrix}\!\right)$} (2m);
\end{tikzpicture}
};
\node (n2) at (4,0) {\small$
\begin{array}{c}
aA=bB=cC=Dd=0 \\
Aa+Bb=Cc+Dd\\
\end{array}
$
};

\node (n1) at (8,0) 
{
\begin{tikzpicture} [bend angle=45, looseness=1]
\node (0m) at (0,0)  {$\small S_0^-$};
\node (0p) at (0,3)  {$\small S_0^+$};
\node (2p) at (3.5,3)  {$\small S_2^+$};
\node (2m) at (3.5,0)  {$\small S_2^-$};
\node (1) at (1.75,1.5) {$\small S_1$};
\draw [->,bend left=20] (0p) to node[above,gap]  {$\scriptstyle (x,y)$} (1);
\draw [->,bend left=20] (1) to node[below,gap]{$\scriptstyle \left(\!\begin{smallmatrix}y\\-x\end{smallmatrix}\!\right)$} (0p);
\draw [->,bend left=20] (0m) to node[above,gap]  {$\scriptstyle (x,-y)$} (1);
\draw [->,bend left=20] (1) to node[below,gap]{$\scriptstyle \left(\!\begin{smallmatrix}y\\x\end{smallmatrix}\!\right)$} (0m);
\draw [->,bend left=20] (2p) to node[below,gap]  {$\scriptstyle (y,-x)$} (1);
\draw [->,bend left=20] (1) to node[above,gap]{$\scriptstyle \left(\!\begin{smallmatrix}x\\y\end{smallmatrix}\!\right)$} (2p);
\draw [->,bend left=20] (2m) to node[below,gap]  {$\scriptstyle (y,x)$} (1);
\draw [->,bend left=20] (1) to node[above,gap]{$\scriptstyle \left(\!\begin{smallmatrix}x\\-y\end{smallmatrix}\!\right)$} (2m);
\end{tikzpicture}
};
\end{tikzpicture}
\vspace{-0.25cm}
\caption{Representation space (left) with relations and the maps between the coinvariant algebras $S_\rho$ (right) for $D_4$.}
\label{exa:D4:quiver}
\end{center}
\end{figure}

To calculate explicitly $\Mtheta$ we construct the representation space by putting a vector space $\C^{\dim(\rho)}_\rho$ of dimension $\dim(\rho)$ at each vertex $\rho$ in $Q$, and linear maps between them according to the arrows of $Q$ (see Figure \ref{exa:D4:quiver} left). Then we choose $\theta^0$ to be the 0-generated stability condition, i.e. $\theta^0$ is generic and $\theta^0_i>0$ for every $i\neq0$, which in practice means that for every vertex $\C^{\dim(\rho)}_\rho$ with $\rho\in\Irr D_4\backslash\{\rho_0^+\}$ there exists $\dim(\rho)$ linearly independent paths from $\C_{\rho_0^+}$ to $\C^{\dim(\rho)}_{\rho}$. The computations for $D_4$ gives us $\Hilb{D_4}{\C^2}$ as the union of five open sets $U_i$, $i=1,\ldots,5$, all of them isomorphic to a hypersurface $f=0$ in $\C^3$. See Table \ref{exa:D4:opens} for the choices of linear independent paths. 

\begin{table}[htp]
\begin{center}
\renewcommand{\arraystretch}{1.5}
\begin{tabular}{ccc}
 Open	& Linear independent paths & $(f=0)\subset\C^3$ \\
\hline
$U_1$ & $a=(1,0)$, $c=(0,1)$, $B_2=C_1=D_1=1$ & $D_2(B_1^2b_1+1)=B_1b_1$  \\
$U_2$ & $a=(1,0)$, $c=(0,1)$, $B_1=C_1=D_1=1$ & $B_2(d_2+1)=D_2d_2$  \\
$U_3$ & $a=(1,0)$, $d=(0,1)$, $B_1=C_1=D_1=1$ & $C_2(b_2-1)=B_2b_2$  \\
$U_4$ & $a=(1,0)$, $c=(0,1)$, $B_1=C_1=D_2=1$ & $B_2(D_1^2d_1-1)=D_1d_1$ \\
$U_5$ & $a=(1,0)$, $d=(0,1)$, $B_1=C_2=D_1=1$ & $B_2(C_1^2c_1-1)=C_1c_1$ \\
\hline
\end{tabular}
\end{center}
\caption{Open cover $\Hilb{D_4}{\C^2}=\bigcup_{i=1}^5 U_i$.}
\label{exa:D4:opens}
\end{table}%

To write down the basis for the tautological bundles $\RR_\rho$ we need to consider the {\em coinvariant algebras} $S_\rho$ for every $\rho\in\Irr G$, which are the Cohen-Macaulay $\C[x,y]^G$-modules $S_\rho:=(\C[x,y]\otimes_\C\rho)^G$ where $G$ acts on $\C[x,y]$ by the inverse transpose. For example, the polynomial $(+):=x^2+y^2$ belongs to $S_{\rho_2^+}$ because:
\begin{align*}
\alpha(x^2+y^2\otimes e_2^+) &= (-ix)^2+(ix)^2 \otimes e_2^+ = x^2+y^2\otimes e_2^+
\\
\beta(x^2+y^2\otimes e_2^+) &= (-y)^2+x^2 \otimes e_2^+ = x^2+y^2\otimes e_2^+
\end{align*}
where we denoted with $e_2^+$ the basis of $\rho_2^+$. Similarly, the polynomial $\minus:=x^2-y^2\in S_{\rho_2^+}$, and $S_{\rho_1}$ contains the pairs $(x,y)$ and $(y\minus, x\minus)$. Clearly $S_{\rho_0^+}=\C[x,y]^{D_4}$.

It is well know that if $G\subset\SL(n,\C)$ is finite, then the McKay quiver $Q$ with (suitable) relations is isomorphic to the skew group algebra $\C[x,y]\ast G$, which is isomorphic to the endomorphism algebra $\End_{\C[x,y]^G}(\bigoplus_{\rho\in\Irr G}S_\rho^{\dim(\rho)})$ by a result of Auslader. This allows us to use the linearly independent arrows at every open set to choose the basis of every tautological bundle $\RR_\rho$. In our example see Figure \ref{exa:D4:quiver} (right) for the quiver among the coinvariant algebras for $G=D_4$. The linearly independent paths of Table \ref{exa:D4:opens} produce the basis of the tautological bundles $\RR_\rho$ at every open set in $\Hilb{D_4}{\C^2}$ shown in Table \ref{exa:D4:taut}. 

\begin{table}[htp]
\begin{center}
\setlength{\tabcolsep}{3pt}
\renewcommand{\arraystretch}{1.5}
\begin{tabular}{ccccccc}
 	& $\RR^+_0$ & $\RR^-_0$ & $\RR^+_2$ & $\RR^-_2$ & $\RR_{1}$ & $\det\RR_1$\\
\hline
$U_1$ & $1$ & $\plus\minus$ & $\plus$ & $\minus$ & $\left(\begin{smallmatrix}x\\y\end{smallmatrix}\right)$, $\left(\!\begin{smallmatrix}y\plus\\-x\plus\end{smallmatrix}\!\right)$ & $\plus^2$ \\
$U_2$ & $1$ & $xy$ & $\plus$ & $\minus$ & $\left(\begin{smallmatrix}x\\y\end{smallmatrix}\right)$, $\left(\!\begin{smallmatrix}y\plus\\-x\plus\end{smallmatrix}\!\right)$ & $\plus^2$  \\
$U_3$ & $1$ & $xy$ & $\plus$ & $\minus$ & $\left(\begin{smallmatrix}x\\y\end{smallmatrix}\right)$, $\left(\!\begin{smallmatrix}y\minus\\x\minus\end{smallmatrix}\!\right)$ & $\minus^2$  \\
$U_4$ & $1$ & $xy$ & $\plus$ & $xy\plus$ & $\left(\begin{smallmatrix}x\\y\end{smallmatrix}\right)$, $\left(\!\begin{smallmatrix}y\plus\\-x\plus\end{smallmatrix}\!\right)$ & $\plus^2$ \\
$U_5$ & $1$ & $xy$ & $xy\minus$ & $\minus$ & $\left(\begin{smallmatrix}x\\y\end{smallmatrix}\right)$, $\left(\!\begin{smallmatrix}y\minus\\x\minus\end{smallmatrix}\!\right)$ & $\minus^2$  \\
\hline
\end{tabular}
\end{center}
\caption{Basis of sections of each $\RR_\rho$ on every open set in $\Hilb{D_4}{\C^2}$. Notation: $\plus:=x^2+y^2$ and $\minus:=x^2-y^2$.}
\label{exa:D4:taut}
\end{table}

Using the sections in Table \ref{exa:D4:taut} we can check that $\RR_{\rho}$ is $\OO(1)$ restricted to $E_\rho$ and trivial at the rest of curves, so equation (\ref{c1GSV}) is verified. In this way we can ``mark" every curve in $E$ with an irreducible representation of $G$. Observe that if the curve is marked with a 1-dimensional representation $\rho$, then the  sections of $\RR_\rho$ define the local coordinates of the $E_\rho\cong\PP^1$. For the curve $E_1$, which is marked with the 2-dimensional representation $\rho_1$, the ratio is defined by the relation $d_2(y\plus,-x\plus)=b_2(y\minus,x\minus)$. The explicit description of the exceptional locus in $\Hilb{D_4}{\C^2}$ and the corresponding coordinates and representations is shown in Figure \ref{exa:D4:McKay}.

\begin{figure}[htbp]
\begin{center}
\begin{tikzpicture}

\draw (0,1.5) to 
	node[black, above, inner sep=1.5pt] {\small $xy:\plus\minus$} 
	node[below, inner sep=1.5pt, blue] {$\rho_0^-$}(3.25,1.5);
\draw[->] (0,1.5) to node[above, inner sep=1.5pt] {\scriptsize $B_1$} (0.5,1.5);
\draw (3,-0.25) to node[below, pos=0.6, inner sep=10pt, blue] {} (3,3.25);
\draw[->] (3,1.5) to node[above, inner sep=1.5pt] {\scriptsize $B_2$} (2.5,1.5);
\node[blue] at (3.25,2) {$\rho_1$};

\draw (2.75,0) to 
	node[above, inner sep=1.5pt] {\small $\minus:2xy\plus$}
	node[below, inner sep=1.5pt, blue] {$\rho_2^-$} (6,0);

\draw[->] (3,3) to node[left, inner sep=1.5pt] {\scriptsize $b_2$} (3,2.5);	
\draw[->] (3,3) to node[above, inner sep=1.5pt] {\scriptsize $C_2$} (3.5,3);	

\draw (2.75,3) to %[bend left=15]
	node[above, inner sep=1.5pt] {\small $\plus:2xy\minus$}
	node[below, inner sep=1.5pt, blue] {$\rho_2^+$} (6,3);

\draw[->] (3,0) to node[left, inner sep=1.5pt] {\scriptsize $d_2$} (3,0.5);	
\draw[->] (3,0) to node[below, inner sep=1.5pt] {\scriptsize $D_2$} (3.5,0);	

\draw[->] (6,0) to node[below, inner sep=1.5pt] {\scriptsize $D_1$} (5.5,0);	
\draw[->] (6,3) to node[above, inner sep=1.5pt] {\scriptsize $C_1$} (5.5,3);	

\node[red] at (-0.25,1.5) {\Large $U_1$};
\node[red] at (2.5,-0.25) {\Large $U_2$};
\node[red] at (2.5,3.25) {\Large $U_3$};
\node[red] at (6.35,-0.25) {\Large $U_4$};
\node[red] at (6.35,3.25) {\Large $U_5$};

\end{tikzpicture}
\vspace{-0.2cm}
\caption{McKay correspondence for the group $D_4\subset\SL(2,\C)$.}
\label{exa:D4:McKay}
\end{center}
\end{figure}
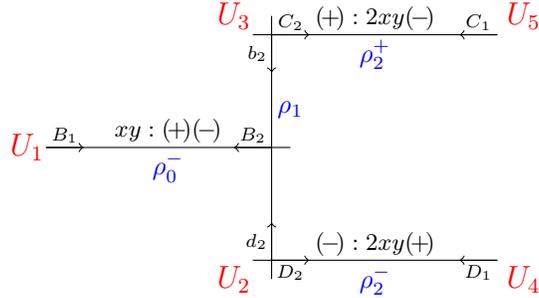

\end{exa}

Reid \cite{Reid} conjectured that statement (\ref{intMcKay}) should hold for any finite subgroup $G\subset\SL(n,\C)$ as long as a crepant resolution $\pi:Y\to\C^n/G$ exists. Under this circumstances, the (tautological) bundles $\RR_\rho$  form a $\Z$-basis of the $K$-theory of $Y$, and certain cookery with the Chern classes $c_i(\RR_\rho)$ would lead to a $\Z$-basis  of $H^*(Y,\Z)$.

When $n=3$ it is known that there always exists such a resolution. In fact, by \cite{BKR} it is know that $Y=\Hilb{G}{\C^3}$ is a crepant resolution of $\C^3/G$ for any finite subgroup of $\SL(3,\C)$. By definition, a $G$-cluster contains all the information of the representation theory of $G$, so even though crepant resolutions are no longer unique the $\Hilb{G}{\C^3}$ is in this sense ``distinguished", and becomes a suitable resolution to test the conjecture. 

For $G=A\subset\SL(3,\C)$ an abelian subgroup, Reid \cite{Reid} verified the correspondence (\ref{intMcKay}) with a series of examples defining a decoration or {\em marking} of the junior simplex of the toric resolution $Y=\Hilb{A}{\C^3}$ which leads to (i) the relations between the tautological bundles $\RR_\rho$ in $\Pic Y$ inducing a basis for $H^2(Y,\Z)$, and (ii) the definition of certain virtual bundles $\FF_E$ for each compact surface $E\subset Y$ whose second Chern class $c_2(\FF_E)$ form a basis of $H^4(Y,\Z)$. The marking and cookery is the so called {\em Reid's recipe}. The extension to every abelian subgroup in $SL(3,\C)$ was completed by Craw \cite{C}, which can be stated as follows:

\begin{theorem}[\cite{C}] The conjecture \ref{intMcKay} holds for any finite abelian subgroup $A\subset\SL(3,\C)$ and $Y=\Hilb{A}{\C^3}$. 
\end{theorem}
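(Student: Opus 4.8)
The plan is to reduce the statement to an explicit toric computation on $Y=\Hilb{A}{\C^3}$ and then to carry out Reid's combinatorial recipe, verifying the resulting Chern-class identities directly. Since $A$ is abelian, $\Irr A=A^\vee$ consists of $|A|$ one-dimensional characters, so every tautological bundle $\RR_\rho$ is a line bundle. First I would recall Nakamura's description (as used in \cite{C}) of $Y$ as a smooth toric variety: writing $N\subset\R^3$ for the overlattice attached to $A$ and $\Delta=\{(t_1,t_2,t_3):\sum_i t_i=1,\ t_i\ge 0\}$ for the junior simplex, $Y$ is the toric variety of a unimodular triangulation of $\Delta$ whose lattice points are the age-one elements of $A$ together with the three vertices $e_1,e_2,e_3$. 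The affine charts $U_\tau$ are indexed by the basic triangles $\tau$, and each carries a distinguished monomial $\C$-basis $\Gamma_\tau$ of the cluster $\OO_Z$ over the torus-fixed point of $U_\tau$ (a $G$-graph in the sense of Nakamura), containing exactly one monomial $m^\tau_\rho$ of each character $\rho$.

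Next I would make the tautological bundles and their markings explicit. On $U_\tau$ the line bundle $\RR_\rho$ is trivialised by $m^\tau_\rho$, so across a wall $U_\tau\cap U_{\tau'}$ the transition function is the Laurent monomial $m^\tau_\rho/m^{\tau'}_\rho$; reading these ratios off for all adjacent charts expresses $c_1(\RR_\rho)\in\Pic Y\cong H^2(Y,\Z)$ as an explicit integral combination $\sum_D n_{\rho,D}\,D$ of the exceptional prime divisors $D$ (those associated to the lattice points of $\Delta$ other than the three vertices). Reid's recipe is precisely the bookkeeping of these coefficients. I would show that it marks each exceptional divisor and each compact exceptional curve by characters so that every nontrivial character is used exactly once, and that the nontrivial characters thereby split into two classes: the \emph{divisor characters}, one for each exceptional divisor, and the \emph{curve characters}, which turn out to be in bijection with the compact exceptional divisors $E$ (the interior lattice points of $\Delta$).

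The cookery then produces a basis of $H^*(Y,\Z)$ degree by degree. In degree zero, $\RR_{\rho_0}=\OO_Y$ gives $H^0$. For $H^2$, the classes $c_1(\RR_\rho)$ attached to the divisor characters form a $\Z$-basis of $H^2(Y,\Z)$: concretely the marking identifies each such class, modulo the three linear relations coming from $M$ (which eliminate the three vertex divisors), with an exceptional divisor class, while the remaining characters contribute only relations among the $c_1(\RR_\rho)$. For $H^4$, following Reid \cite{Reid} I would attach to each compact divisor $E$ a virtual bundle $\FF_E$, an explicit alternating sum of tautological bundles dictated by the curve character of $E$, and verify the duality $\int_{E'} c_2(\FF_E)=\delta_{E E'}$ against the compact divisors, so that $\{c_2(\FF_E)\}$ is a $\Z$-basis of $H^4(Y,\Z)$. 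This is the degree-four analogue of (\ref{c1GSV}). Together these assignments realise the bijection (\ref{intMcKay}).

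The cardinalities match for a uniform reason: the number of basic triangles equals the normalised lattice volume of $\Delta$, which is $|A|$, and since $Y$ is toric one has $H^{\mathrm{odd}}(Y,\Z)=0$ and $\chi(Y)=|A|=|\Irr A|$, with $b_2=\#\{\text{exceptional divisors}\}$ and $b_4=\#\{\text{compact divisors}\}$; thus $b_0+b_2+b_4=|A|$ forces the marking to exhaust the characters. I expect the main obstacle to be exactly this combinatorial core — proving that Reid's recipe is well defined and bijective: that no character marks two distinct divisors, that each exceptional divisor receives a unique divisor character, and, most delicately, that the resulting Chern classes give an \emph{integral} rather than merely rational basis. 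Controlling the $G$-graphs $\Gamma_\tau$ simultaneously across all charts, and tracking how the character of a monomial changes as one crosses each wall of the triangulation, is where the genuine work lies; the Chern-class computations above are then essentially formal once the marking is under control.
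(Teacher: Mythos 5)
Your overall skeleton --- toric charts indexed by $G$-graphs, monomial transition functions for the line bundles $\RR_\rho$, a marking of the exceptional locus, relations in $\Pic Y$, virtual bundles for $H^4$, and the Euler-characteristic count --- is indeed the architecture of Craw's proof in \cite{C}, which is all the paper relies on here (the paper itself gives no proof: it states the theorem with a citation and illustrates it in Example \ref{exa:Ab}). But you have inverted the key assignment of characters to cohomological degrees, and this inversion is fatal to the argument as written. In Reid's recipe it is the \emph{curve}-marking characters (together with the second character marking any del Pezzo surface $dP_6\subset Y$) whose classes $c_1(\RR_\rho)$ form the basis of $H^2(Y,\Z)$; their number equals the number of \emph{all} exceptional prime divisors, compact or not. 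The \emph{divisor}-marking characters are attached only to the \emph{compact} divisors (interior lattice points of the junior simplex) --- non-compact exceptional divisors receive no marking --- and these characters are exactly the redundant ones in $\Pic Y$: each satisfies a relation $\RR_\rho\cong\RR_{\rho'}\otimes\RR_{\rho''}$ with $\rho',\rho''$ curve-marking, and it is precisely this relation that defines $\FF_E$ and hence the $H^4$ basis. You assert the opposite on both counts: that the divisor characters come ``one for each exceptional divisor'' and give the $H^2$ basis, and that the curve characters are in bijection with the compact divisors and dictate $\FF_E$.

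The paper's own example shows your version cannot be repaired by reinterpretation. For $A=\frac{1}{12}(1,7,4)$ (Example \ref{exa:Ab}) there are seven exceptional prime divisors (four compact, three non-compact), so $b_2=7$ and $b_4=4$; the curve-marking characters are $\rho_1,\rho_2,\rho_3,\rho_4,\rho_7,\rho_8,\rho_9$ (seven of them, and a single character may mark several curves, e.g.\ $\rho_4$ marks four), while the divisor-marking ones are $\rho_5,\rho_6,\rho_{10},\rho_{11}$, with the relations $\RR_5=\RR_1\otimes\RR_4$, $\RR_6=\RR_2\otimes\RR_4$, $\RR_{10}=\RR_2\otimes\RR_8$, $\RR_{11}=\RR_4\otimes\RR_7$ producing $\FF_5,\FF_6,\FF_{10},\FF_{11}$ with $\deg(c_2(\FF_i)|_{E_j})=\delta_{ij}$. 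Under your assignment you would be trying to span the rank-$7$ lattice $H^2(Y,\Z)$ by the four classes $c_1(\RR_5),c_1(\RR_6),c_1(\RR_{10}),c_1(\RR_{11})$, which are moreover dependent on the curve classes by the very relations above. Two smaller inaccuracies point the same way: the recipe does not give each marked divisor a unique character --- a $dP_6$ divisor is marked by two, which is exactly why the paper's statement of the $H^2$ basis includes ``the second representation marking a del Pezzo surface'' --- and ``every nontrivial character is used exactly once'' is true only at the level of the resulting basis of $H^*(Y,\Z)$, not of the marking itself. The hard combinatorial work you correctly identify and defer (well-definedness, exhaustion of the characters, integrality of the basis) is the actual content of \cite{C}, but it has to be organized around the correct assignment; as stated, your recipe would not produce bases of the right ranks.
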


The correspondence can be described precisely as follows:
\begin{itemize}
\item $\RR_{\rho_0}\cong\OO_Y$ generates $H^0(Y,\Z)$.
\item $\{[c_1(\RR_\rho)]\}$ such that $\rho$ either marks a curve in $Y$ or is the second representation marking a del Pezzo surface $dP_6\subset Y$ forms a basis of $H^2(Y,\Z)$.
\item $\{[c_2(\FF_E)]\}$ such that $E\subset Y$ is a compact surface forms a basis of $H^4(Y,\Z)$.
\end{itemize}

See Example \ref{exa:Ab} for explicit computations.

Using the language of derived categories, a generalized version of the correspondence was introduced by Cautis-Logvinenko \cite{CL} and completed in Cautis-Craw-Logvinenko \cite{CCL} for $A\subset\SL(3,\C)$, giving rise to the so called {\em derived Reid's recipe}. This interpretation also remains open for the non-abelian case. Finally, for the dimer model case, which covers every three-dimensional toric Gorenstein singularity, the recipe has been studied in \cite{BCQ} and \cite{CHT}.

There are very few examples of this explicit correspondence when the group $G$ is non-abelian, mainly due to the lack of examples of explicit descriptions of crepant resolutions away from the toric case. One can find the case of finite $G\subset\SO(3)$ in \cite{NS}, where $\Hilb{\C^3}{G}$ is computed. In this case the exceptional fibre $\pi^{-1}(0)$ is 1-dimensional, so the exceptional locus does not contain compact divisors, and there is a one-to-one correspondence between (non-trivial) irreducible exceptional curves in $E$ and (non-trivial) irreducible representations of $G$. With the presence of compact divisors, Gomi-Nakamura-Shinoda \cite{GNS} computed the exceptional locus of $\Hilb{G}{\C^3}$ for $G_{60}$ and $G_{168}$, the only two non-abelian simple groups in $\SL(3,\C)$. The resolution of $\C^3/G_{60}$ has 1-dimensional fiber over $\pi^{-1}(0)$ whereas for $G_{168}$ consists of a smooth rational curve and a doubly blown-up projective plane intersecting transversally at a unique point.

In this paper we include the explicit description for two new non-abelian examples in $\SL(3,\C)$, namely the dihedral group $\mathbb{D}_{5,2}$ (Section \ref{exa:Dn}) and the trihedral group $\Span{\frac{1}{13}(1,3,9),T}$ (Section \ref{exa:T}), in order to define an analogue of Reid's recipe and check the correspondence (\ref{intMcKay}) on them. Although there are some differences from the abelian case, most of the features and spirit of the recipe seem to hold. Both examples are contained in two interesting families of subgroups of $\SL(3,\C)$, and may constitute the first step to generalize Reid's recipe to this cases. 

I would like to thank the organisers of the conference {\em The McKay Correspondence, Mutations and Related Topics} for hosting the meeting under difficult circumstances, R. Mu\~noz and A. Craw for lots of useful discussions, and grateful to A. Craw for sharing with me an earlier version of \cite{C2}. I owe special thanks to M. Reid for introducing me to these beautiful families of examples and for so many insights into their absorbing calculations.

\section{Marking the exceptional locus}\label{sec:marking}

The {\em marking} of the irreducible components of the exceptional locus $E\subset Y=\Hilb{G}{\C^3}$ for $G\subset\SL(3,\C)$ finite is done as follows. Let $\{U_{i}\}_{i=0}^{m}$ be an open cover of $Y$, and denote by $r_{i,\rho}$ the basis of sections of $\RR_\rho$ at $U_i$ (there are $\dim(\rho)$ elements in $r_{i,\rho}$). Note that $r_{i,\rho}\in S_\rho$. Then $\ZZ_i=\{r_{i,\rho}~|~\rho\in\Irr G\}$ form a basis for any $G$-cluster in $U_i$, also known as a {\em $G$-graph}.

Let $C\subset Y$ be a rational curve. Define the submodule $G$-igMod$_C(U_i):=\{r_{i,\rho}~|~\deg(\RR_i|_C)\neq0\}$ of $\ZZ_i$. This submodule contains the {\em $G$-igsaw transformation} along the curve $C$ among the basis elements in $\RR$. Now, the curve $C\subset Y$ is covered by two open sets, $C\subset U_{i}\cap U_{j}$ and we say that
\[
\text{$\rho$ marks $C \iff r_{\alpha,\rho}$ generates $G$-igMod$_C(U_\alpha)$ for $\alpha=i$ or $\alpha=j$}
\]
Let $D\subset Y$ be a compact divisor, and let $\{U_i\}_{i=1}^k$ be an open covering of $D$. Then
\[
\begin{array}{ccc}
\text{$\rho$ marks $D$} & \iff & \text{$r_{i,\rho}$ is in the socle of $\ZZ_i$ at the } \\&& \text{origin of $U_i$, for $i=1,\ldots,k$}
\end{array}
\]

\begin{rem} Notice that it may happen that a curve $C$ is marked by two representations (see the curve $C_1$ in Example \ref{exa:T}), as it already happens in the dimer model case (e.g. \cite[4.19]{CHT}) . This fact contrasts with the toric case where the submodules $G$-igMod$_C(U_i)$ and $G$-igMod$_C(U_j)$ are always generated from the same representation. Also, a divisor $D$ may be marked with more than one representation (see divisor $E_1$ in the trihedral example of Section \ref{exa:T}), a situation present in the abelian case when $Y$ contains a del Pezzo $dP_6$ surface. 
\end{rem}

For the abelian case $G=A$, this definition is equivalent to the marking initially proposed by the Reid's recipe, which takes advantage of the toric structure of $\Hilb{A}{\C^3}$ (see \cite[$\S$3, $\S$6.2]{C}). Both definitions can be compared in the following axample.

\begin{exa}\label{exa:Ab}
Let $A$ be the following abelian group of order 12:
\[
A=\frac{1}{12}(1,7,4):=\Span{\alpha=\diag(\varepsilon,\varepsilon^7,\varepsilon^4)~|~\varepsilon=e^{2\pi i/12}}\subset\SL(3,\C)
\]
which has 12 irreducible representations $\rho_i:A\to\C$ verifying $\rho_i(\alpha)=\varepsilon^i$ for $i=0,\ldots,11$. The action of $A$ on $\C[x,y,z]$ decomposes into the coinvariant algebras $S_i:=S_{\rho_i}$, $i=0,\ldots,11$. Some of the elements in these algebras are shown in Table \ref{ex:Ab:Srho} (omitting the $_-\otimes e_i$ by abusing the notation). The group is abelian so every tautological bundle $\RR_i:=\RR_{\rho_i}$ has rank 1.

\begin{table}[h]
\begin{center}
\begin{tabular}{|c|l||c|l||c|l|}
\hline
$S_0$ & 1, $x^{12}$, $y^{12}$, $z^3$ & $S_4$ & $x^4$, $y^4$, $z$ & $S_8$ & $x^{8}$, $y^{8}$, $z^2$, $xy$ \\
$S_1$ & $x$, $y^{7}$, $y^3z$ & $S_5$ & $x^5$, $y^{11}$, $xz$, $xy^4$ & $S_9$ & $x^{9}$, $y^{3}$, $x^2y$, $xz^2$ \\
$S_2$ & $x^{2}$, $y^{2}$ & $S_6$ & $x^6$, $y^6$, $x^2z$, $y^2z$ & $S_{10}$ & $x^{10}$, $y^{10}$, $x^3y$, $xy^3$ \\
$S_3$ & $x^{3}$, $y^{9}$, $xy^2$, $yz^2$ & $S_7$ & $x^7$, $y$, $x^3z$ & $S_{11}$ & $x^{11}$, $y^{5}$, $yz$, $x^4y$ \\
\hline
\end{tabular}
\end{center}
\caption{Coinvariant algebras $S_i$ for the group $A=\frac{1}{12}(1,7,4)$.}
\label{ex:Ab:Srho}
\end{table}%

Then $Y=\Hilb{A}{\C^3}\to\C^3/A$ is a toric crepant resolution that is described using Craw-Reid \cite{CR} by the triangulation of the junior simplex shown in Figure \ref{exa:Ab:marking} (left). Every triangle corresponds to an affine open set isomorphic to $\C^3$, every interior edge corresponds to a rational curve $C\subset Y$ parametrized by the ratios indicated in the figure, and every interior point corresponds to an irreducible compact divisor in $Y$.   

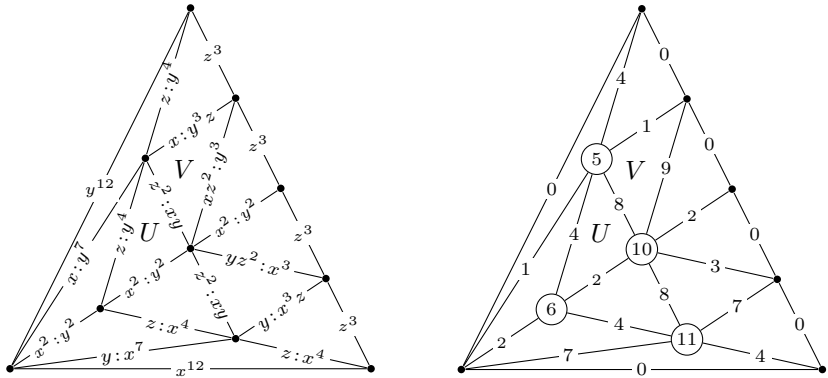
\begin{figure}[htbp]
\begin{center}
\begin{tikzpicture}
\node at (0,0) 
{
\begin{tikzpicture}[scale=0.4]
\node (e1) [circle,fill=black,inner sep=1pt] at (6,12) {};
\node (e2) [circle,fill=black,inner sep=1pt] at (12,0) {};
\node (e3) [circle,fill=black,inner sep=1pt] at (0,0) {};
\node (p174) [circle,fill=black,inner sep=1pt] at (7.5,1) {};
\node (p228) [circle,fill=black,inner sep=1pt] at (3,2) {};
\node (p390) [circle,fill=black,inner sep=1pt] at (10.5,3) {};
\node (p444) [circle,fill=black,inner sep=1pt] at (6,4) {};
\node (p660) [circle,fill=black,inner sep=1pt] at (9,6) {};
\node (p714) [circle,fill=black,inner sep=1pt] at (4.5,7) {};
\node (p930) [circle,fill=black,inner sep=1pt] at (7.5,9) {};
\draw [-] (e3) to node[rotate=35, gap, inner sep=1.5pt] {\tiny $x^2\!:\!y^2$} (p228);
\draw [-] (p228) to node[rotate=35, gap, inner sep=1.5pt] {\tiny $x^2\!:\!y^2$} (p444);
\draw [-] (p444) to node[rotate=35, gap, inner sep=1.5pt] {\tiny $x^2\!:\!y^2$} (p660);
\draw [-] (e3) to node[rotate=5, gap, inner sep=1.5pt] {\scriptsize $y\!:\!x^7$} (p174);
\draw [-] (e3) to node[rotate=65, gap, inner sep=1.75pt] {\scriptsize $x\!:\!y^7$} (p714);
\draw [-] (e1) to node[rotate=75, gap, inner sep=1.5pt] {\scriptsize $z\!:\!y^4$} (p714);
\draw [-] (p714) to node[rotate=75, gap, inner sep=1.5pt] {\scriptsize $z\!:\!y^4$} (p228);
\draw [-] (e2) to node[rotate=-12.5, gap, inner sep=2pt] {\scriptsize $z\!:\!x^4$} (p174);
\draw [-] (p174) to node[rotate=-12.5, gap, inner sep=2pt] {\scriptsize $z\!:\!x^4$} (p228);
\draw [-] (p714) to node[rotate=-60, gap, inner sep=1.5pt] {\scriptsize $z^2\!:\!xy$} (p444);
\draw [-] (p444) to node[rotate=-60, gap, inner sep=1.5pt] {\scriptsize $z^2\!:\!xy$} (p174);
\draw [-] (p714) to node[rotate=35, gap, inner sep=1.5pt] {\scriptsize $x\!:\!y^3z$} (p930);
\draw [-] (p174) to node[rotate=35, gap, inner sep=1.5pt] {\scriptsize $y\!:\!x^3z$} (p390);
\draw [-] (p930) to node[rotate=75, gap, inner sep=1.5pt] {\scriptsize $xz^2\!:\!y^3$} (p444);
\draw [-] (p390) to node[rotate=-15, gap, inner sep=1.5pt] {\scriptsize $yz^2\!:\!x^3$} (p444);
\draw [-] (e1) to node[gap, inner sep=1.5pt] {\tiny $z^3$}  (p930) 
		     to node[gap, inner sep=1.5pt] {\tiny $z^3$}  (p660)
		     to node[gap, inner sep=1.5pt] {\tiny $z^3$}  (p390)
		     to node[gap, inner sep=1.5pt] {\tiny $z^3$}  (e2)   
		     to node[gap, inner sep=1.5pt] {\tiny $x^{12}$} (e3) 
		     to node[gap, inner sep=1.5pt] {\tiny $y^{12}$} (e1);
\node at (4.65,4.55) {$U$};
\node at (5.8,6.65) {$V$};
\end{tikzpicture}
};
\node at (6,0) 
{
\begin{tikzpicture}[scale=0.4]
\node (e1) [circle,fill=black,inner sep=1pt] at (6,12) {};
\node (e2) [circle,fill=black,inner sep=1pt] at (12,0) {};
\node (e3) [circle,fill=black,inner sep=1pt] at (0,0) {};
\node (p174) [circle,draw,inner sep=1pt] at (7.5,1) {\scriptsize$11$};
\node (p228) [circle,draw,inner sep=2pt] at (3,2) {\scriptsize$6$};
\node (p390) [circle,fill=black,inner sep=1pt] at (10.5,3) {};
\node (p444) [circle,draw,inner sep=1pt] at (6,4) {\scriptsize$10$};
\node (p660) [circle,fill=black,inner sep=1pt] at (9,6) {};
\node (p714) [circle,draw,inner sep=2pt] at (4.5,7) {\scriptsize$5$};
\node at (4.65,4.55) {$U$};
\node at (5.8,6.65) {$V$};
\node (p930) [circle,fill=black,inner sep=1pt] at (7.5,9) {};
\draw [-] (e3) to node[gap, inner sep=1.5pt] {\tiny $2$} (p228);
\draw [-] (p228) to node[gap, inner sep=1.5pt] {\tiny $2$} (p444);
\draw [-] (p444) to node[gap, inner sep=1.5pt] {\tiny $2$} (p660);
\draw [-] (e3) to node[gap, inner sep=1.5pt] {\scriptsize $7$} (p174);
\draw [-] (e3) to node[gap, inner sep=1.75pt] {\scriptsize $1$} (p714);
\draw [-] (e1) to node[gap, inner sep=1.5pt] {\scriptsize $4$} (p714);
\draw [-] (p714) to node[gap, inner sep=1.5pt] {\scriptsize $4$} (p228);
\draw [-] (e2) to node[gap, inner sep=2pt] {\scriptsize $4$} (p174);
\draw [-] (p174) to node[gap, inner sep=2pt] {\scriptsize $4$} (p228);
\draw [-] (p714) to node[gap, inner sep=1.5pt] {\scriptsize $8$} (p444);
\draw [-] (p444) to node[gap, inner sep=1.5pt] {\scriptsize $8$} (p174);
\draw [-] (p714) to node[gap, inner sep=1.5pt] {\scriptsize $1$} (p930);
\draw [-] (p174) to node[gap, inner sep=1.5pt] {\scriptsize $7$} (p390);
\draw [-] (p930) to node[gap, inner sep=1.5pt] {\scriptsize $9$} (p444);
\draw [-] (p390) to node[gap, inner sep=1.5pt] {\scriptsize $3$} (p444);
\draw [-] (e1) to node[gap, inner sep=1.5pt] {\scriptsize $0$}  (p930) 
		     to node[gap, inner sep=1.5pt] {\scriptsize $0$}  (p660)
		     to node[gap, inner sep=1.5pt] {\scriptsize $0$}  (p390)
		     to node[gap, inner sep=1.5pt] {\scriptsize $0$}  (e2)   
		     to node[gap, inner sep=1.5pt] {\scriptsize $0$} (e3) 
		     to node[gap, inner sep=1.5pt] {\scriptsize $0$} (e1);
\end{tikzpicture}
};
\end{tikzpicture}
\vspace{-0.25cm}
\caption{Toric description of $\Hilb{A}{\C^3}$ (left) and the corresponding marking of Reid's recipe (right) for $A=\frac{1}{12}(1,7,4)$.}
\label{exa:Ab:marking}
\end{center}
\end{figure}

For example, consider the open sets $U$ and $V$ highlighted in Figure \ref{exa:Ab:marking}, with local coordinates $U\cong\C[\frac{z^2}{xy},\frac{x^2}{y^2},\frac{y^4}{z}]$ and $V\cong\C[\frac{xy}{z^2},\frac{xz^2}{y^3},\frac{y^3z}{x}]$. The basis of the tautological bundles $\RR_i$ in both open sets are shown in Table \ref{exa:A12:basisRi}.

%{\scriptsize
\begin{table}[htp]
\begin{center}
\renewcommand{\arraystretch}{1.5}
\begin{tabular}{ccccccccccccc}
 & $\RR_0$ & $\RR_1$ & $\RR_2$ & $\RR_3$ & $\RR_4$ & $\RR_5$ & $\RR_6$ & $\RR_7$ & $\RR_8$ & $\RR_9$ & $\RR_{10}$ & $\RR_{11}$  \\
\hline
$\ZZ_U$ &  $1$ & $x$ &  $y^2$ & ${\color{red} xy^2}$ & $z$ & $xz$ & $y^2z$ & $y$ & ${\color{red} xy}$ & $y^3$ & ${\color{red} xy^3}$ & $yz$
\\
$\ZZ_V$ &  $1$ & $x$ &  $y^2$ & ${\color{red} yz^2}$ & $z$ & $xz$ & $y^2z$ & $y$ & ${\color{red} z^2}$ & $y^3$ & ${\color{red} y^2z^2}$ & $yz$
\\
\hline
\end{tabular}
\end{center}
\caption{Basis of the fibres of each bundle $\RR_i$ on $U$ and $V$.}
\label{exa:A12:basisRi}
\end{table}%
%}

The open sets $U$ and $V$ cover a rational curve $C$ parametrised by the ratio $(xy:z^2)$. Along this curve we have that $G$-igMod$_C(U)=\Span{xy, xy^2, xy^3}$ and $G$-igMod$_C(V)=\Span{z^2, yz^2, y^2z^2}$, which are generated by $xy$ and $z^2$ respectively. Both monomials belong to $S_{8}$ so the curve $C$ is marked with $\rho_8$. If we look at the socles at the origin of both open sets we find that $Soc(U)=\{xz,xy^3,y^2z\}\in S_{5}\oplus S_{10}\oplus S_{6}$ and $Soc(V)=\{xz,y^2z^2,y^3\}\in S_{5}\oplus S_{10}\oplus S_{9}$. We see that the representations $\rho_5$ and $\rho_{10}$ are involved in both open sets, and a similar calculation with the rest of the open sets give the marking for the compact divisors of Figure \ref{exa:Ab:marking} (right).

There are 4 relations in $\Pic Y$:
\begin{align*}
\RR_5 &= \RR_1\otimes\RR_4	&\RR_6 &= \RR_2\otimes\RR_4 \\
\RR_{10} &= \RR_2\otimes\RR_8	&\RR_{11} &= \RR_4\otimes\RR_7 
\end{align*}
which can be verified in the open sets $U$ and $V$ with the data in Table \ref{exa:A12:basisRi}, as well as in the rest of open sets in the covering of $\Hilb{A}{\C^2}$. Following the cookery of Reid's recipe (see \cite[7.3]{C}) we can define the virtual vector bundles:
\begin{align*}
\FF_5 &:= (\RR_1\otimes\RR_4	)\ominus(\RR_5\otimes\OO_Y) &\FF_6 &:= (\RR_2\otimes\RR_4)\ominus(\RR_6\otimes\OO_Y) \\
\FF_{10} &:= (\RR_2\otimes\RR_8)\ominus(\RR_{10}\otimes\OO_Y)	&\FF_{11} &:= (\RR_4\otimes\RR_7)\ominus(\RR_{11}\otimes\OO_Y)
\end{align*}
and it can be checked that $\deg(c_2(\FF_i)|_{E_j})=\delta_{ij}$ where $E_j$ is the compact divisor in $Y$ marked with $\rho_j$.

\end{exa}

\section{A dihedral example in $\SL(3,\C)$}\label{exa:Dn}

Dihedral groups in $\SL(3,\C)$ are finite groups isomorphic to dihedral groups groups of $\GL(2,\C)$. They are contained in the type (B) of Yau-Yu's classification \cite{YY} which are constructed as follows: take $\overline{G}\subset\GL(2,\C)$ a dihedral group and define 
\[
G = \Span{\begin{pmatrix}g&0\\0&\det(g)^{-1}\end{pmatrix}~\bigg|~g\in\overline{G}}\subset\SL(3,\C)
\]

Taking the group $\overline{G}=\Span{\frac{1}{12}(1,7),\left(\!\begin{smallmatrix}\phantom{-}0&1\\-1&0\end{smallmatrix}\right)}\subset\GL(2,\C)$, the group $\mathbb{D}_{5,2}$ in Riemenschneider notation, then 
\[
G = \Span{\alpha=\frac{1}{12}(1,7,4),\beta=\left(\!\begin{smallmatrix}\phantom{-}0&1&0\\-1&0&0\\\phantom{-}0&0&1\end{smallmatrix}\right)}
\]

The cyclic subgroup $A=\Span{\alpha}=\frac{1}{12}(1,7,4)$, which was treated in Example \ref{exa:Ab}, is normal in $G$ of index 2. The group $G/A=\Span{\overline{\beta}}\cong\Z/2\Z$ acts on $\Irr A=\{\rho_0,\ldots,\rho_{11}\}$ by conjugation, such that 
\begin{itemize}
\item[(a)] it fixes $\rho_0$, $\rho_2$, $\rho_4$, $\rho_6$, $\rho_8$ and $\rho_{10}$
\item[(b)] it interchanges $\rho_1$ with $\rho_7$; $\rho_3$ with $\rho_9$; and $\rho_5$ with $\rho_{11}$.
\end{itemize}

The fixed representations in $(a)$ produce 12 irreducible 1-dimensional representations of $G$: $\rho_0^+$ (trivial), $\rho_0^-$, $\rho_2^+$, $\rho_2^-$, $\rho_4^+$, $\rho_4^-$, $\rho_6^+$, $\rho_6^-$, $\rho_8^+$, $\rho_8^-$, $\rho_{10}^+$ and $\rho_{10}^-$. The orbits in $(b)$ produce 3 irreducible 2-dimensional representations of $G$: $V_1$, $V_9$ and $V_5$. In total $G$ has 15 irreducible representations. The McKay quiver $Q$ and relations $R$ for this case is shown in Figure \ref{exa:Dn:McKayQ} where the left and right sides are identified. The quiver can be obtained from the McKay quiver of $\overline{G}$ by adding the horizontal arrows. 

\begin{figure}[htbp]
\begin{center}
\begin{tikzpicture}

\node at (-1,0) 
{
\begin{tikzpicture}[scale=0.5,outer sep=4pt,inner sep=-2pt]
\node (0pizq) at (0,8) {\scriptsize $0+$};
\node (0mizq) at (0,6) {\scriptsize $0-$};
\node (izq) at (-0.75,4) {};
\node (6pizq) at (0,2) {\scriptsize $6+$};
\node (6mizq) at (0,0) {\scriptsize $6-$};
\node (1) at (2,4) {\scriptsize $1$};
\node (8p) at (4,8) {\scriptsize $8+$};
\node (8m) at (4,6) {\scriptsize $8-$};
\node (2p) at (4,2) {\scriptsize $2+$};
\node (2m) at (4,0) {\scriptsize $2-$};
\node (9) at (6,4) {\scriptsize $9$};
\node (4p) at (8,8) {\scriptsize $4+$};
\node (4m) at (8,6) {\scriptsize $4-$};
\node (10p) at (8,2) {\scriptsize $10+$};
\node (10m) at (8,0) {\scriptsize $10-$};
\node (5) at (10,4) {\scriptsize $5$};
\node (0pder) at (12,8) {\scriptsize $0+$};
\node (0mder) at (12,6) {\scriptsize $0-$};
\node (der) at (12.75,4) {};
\node (6pder) at (12,2) {\scriptsize $6+$};
\node (6mder) at (12,0) {\scriptsize $6-$};

\draw [->] (0pder) to node[above, inner sep=-2pt] {\scriptsize $j_2$} (4p);
\draw [->] (0mder) to node[above, inner sep=-2pt] {\scriptsize $k_2$} (4m);
\draw [->] (der) to node[gap, pos=0.25, inner sep=0pt] {\scriptsize $z_0$} (5);
\draw [->] (6pder) to node[below, inner sep=-2pt] {\scriptsize $m_2$} (10p);
\draw [->] (6mder) to node[below, inner sep=-2pt] {\scriptsize $n_2$} (10m);

\draw [->] (4p) to node[above, inner sep=-2pt] {\scriptsize $j_1$} (8p);
\draw [->] (4m) to node[above, inner sep=-2pt] {\scriptsize $k_1$} (8m);
\draw [->] (5) to node[gap, inner sep=0pt] {\scriptsize $z_2$} (9);
\draw [->] (10p) to node[below, inner sep=-2pt] {\scriptsize $m_1$} (2p);
\draw [->] (10m) to node[below, inner sep=-2pt] {\scriptsize $n_1$} (2m);

\draw [->] (8p) to node[above, inner sep=-2pt] {\scriptsize $j_0$} (0pizq);
\draw [->] (8m) to node[above, inner sep=-2pt] {\scriptsize $k_0$} (0mizq);
\draw [->] (9) to node[gap, inner sep=0pt] {\scriptsize $z_1$} (1);
\draw [->] (1) to node[gap, pos=0.65, inner sep=0pt] {\scriptsize $z_0$} (izq);
\draw [->] (2p) to node[below, inner sep=-2pt] {\scriptsize $m_0$} (6pizq);
\draw [->] (2m) to node[below, inner sep=-2pt] {\scriptsize $n_0$} (6mizq);

\draw [->] (0pizq) to node[above, pos=0.45, inner sep=5pt] {\scriptsize $a_0$} (1);
\draw [->] (0mizq) to node[below, very near start, inner sep=0pt] {\scriptsize $c_0$} (1);
\draw [->] (6pizq) to node[above, very near start, inner sep=0pt] {\scriptsize $f_0$} (1);
\draw [->] (6mizq) to node[below, pos=0.44, inner sep=5pt] {\scriptsize $h_0$} (1);

\draw [->] (1) to node[above, pos=0.55, inner sep=5pt] {\scriptsize $b_0$} (8p);
\draw [->] (1) to node[below, very near end, inner sep=0pt] {\scriptsize $d_0$} (8m);
\draw [->] (1) to node[above, very near end, inner sep=0pt] {\scriptsize $e_0$} (2p);
\draw [->] (1) to node[below, pos=0.55, inner sep=5pt] {\scriptsize $g_0$} (2m);

\draw [->] (8p) to node[above, pos=0.45, inner sep=5pt] {\scriptsize $a_1$} (9);
\draw [->] (8m) to node[below, very near start, inner sep=0pt] {\scriptsize $c_1$} (9);
\draw [->] (2p) to node[above, very near start, inner sep=0pt] {\scriptsize $f_1$} (9);
\draw [->] (2m) to node[below, pos=0.45, inner sep=5pt] {\scriptsize $h_1$} (9);

\draw [->] (9) to node[above, pos=0.55, inner sep=5pt] {\scriptsize $b_1$} (4p);
\draw [->] (9) to node[below, very near end, inner sep=0pt] {\scriptsize $d_1$} (4m);
\draw [->] (9) to node[above, very near end, inner sep=0pt] {\scriptsize $e_1$} (10p);
\draw [->] (9) to node[below, pos=0.55, inner sep=5pt] {\scriptsize $g_1$} (10m);

\draw [->] (4p) to node[above, pos=0.45, inner sep=5pt] {\scriptsize $a_2$} (5);
\draw [->] (4m) to node[below, very near start, inner sep=0pt] {\scriptsize $c_2$} (5);
\draw [->] (10p) to node[above, very near start, inner sep=0pt] {\scriptsize $f_2$} (5);
\draw [->] (10m) to node[below, pos=0.45, inner sep=5pt] {\scriptsize $h_2$} (5);

\draw [->] (5) to node[above, pos=0.55, inner sep=5pt] {\scriptsize $b_2$} (0pder);
\draw [->] (5) to node[below, very near end, inner sep=0pt] {\scriptsize $d_2$} (0mder);
\draw [->] (5) to node[above, very near end, inner sep=0pt] {\scriptsize $e_2$} (6pder);
\draw [->] (5) to node[below, pos=0.55, inner sep=5pt] {\scriptsize $g_2$} (6mder);
\end{tikzpicture}
};

\node at (6.1,1.5) {
{\tiny
$\begin{array}{ccc}
b_0j_0-z_0b_2=0  	&a_1z_1-j_0a_0=0 	&d_0k_0-z_0d_2=0 	\\
b_1j_1-z_1b_0=0  	&a_2z_2-j_1a_1=0 	&d_1k_1-z_1d_0=0 	\\
b_2j_2-z_2b_1=0  	&a_0z_0-j_2a_2=0 	&d_2k_2-z_2d_1=0 	\\
\end{array}$}};

\node at (6.1,0.75) {
{\tiny
$\begin{array}{ccc}
f_1z_1-m_0f_0=0	& e_0m_0-z_0e_2=0 	& h_1z_1-n_0h_0=0	\\  
f_2z_2-m_1f_1=0	& e_1m_1-z_1e_0=0 	& h_2z_2-n_1h_1=0	\\  
f_0z_0-m_2f_2=0	& e_2m_2-z_2e_1=0 	& h_0z_0-n_2h_2=0	
\end{array}$}};

\node at (6.1,0) {
{\tiny
$\begin{array}{ccc}
e_0m_0-z_0e_2=0	& g_0n_0-z_0g_2=0 & c_1z_1-k_0c_0=0\\
e_1m_1-z_1e_0=0	& g_1n_1-z_1g_0=0 & c_2z_2-k_1c_1=0\\
e_2m_2-z_2e_1=0	& g_2n_2-z_2g_1=0 & c_0z_0-k_2c_2=0\\
\end{array}$}};

\node at (6.1,-0.75) {
{\tiny
$\begin{array}{ccc}
c_0d_0=0  	& f_0e_0=0 	& h_0g_0=0 	\\
c_1d_1=0  	& f_1e_1=0 	& h_1g_1=0 	 \\
c_2d_2=0  	& f_2e_2=0 	& h_2g_2=0 
\end{array}$}};

\node at (6.1,-1.5) {
{\tiny
$\begin{array}{c}
b_2a_0+d_2c_0-e_2f_0-g_2h_0=0 \\
b_0a_1+d_0c_1-e_0f_1-g_0h_1=0 \\
b_1a_2+d_1c_2-e_1f_2-g_1h_2=0
\end{array}$}};
\end{tikzpicture}
\vspace{-0.75cm}
\caption{McKay quiver $Q$ and relations $R$ for the dihedral group $G$.}
\label{exa:Dn:McKayQ}
\end{center}
\end{figure}
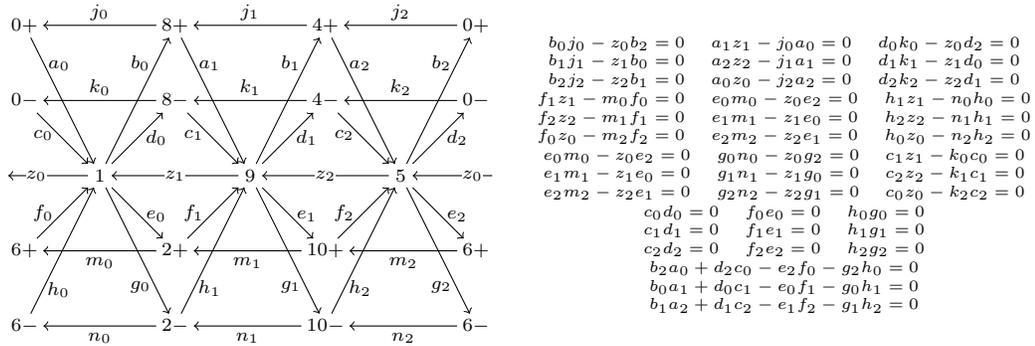

The action of $G/A$ also extends to the coinvariant algebras $S_\rho$ for the group $A$ (see Table \ref{ex:Ab:Srho}). This action forms the coinvariant algebras of $G$, namely $S_i^\pm:=S_{\rho_i^\pm}$ for $i=0,2,4,6,8,10$, and $S_j:=S_{V_j}$ for $j=1,5,9$. These algebras are related by the McKay quiver using the following maps: 
\begin{equation}\label{exa:Dn:xyarrows}
\begin{array}{c}
a = (x,y), b= \begin{pmatrix}y\\-x\end{pmatrix}, c= (x,-y), d=\begin{pmatrix}y\\x\end{pmatrix} \\
e= \begin{pmatrix}x\\y\end{pmatrix}, f= (y,-x), g=\begin{pmatrix}x\\-y\end{pmatrix}, h = (y,x)
\end{array}
\end{equation}

To obtain the elements in each coinvariant algebra we use the maps $(\ref{exa:Dn:xyarrows})$ of the McKay quiver starting from $S_0^+$. For instance, using the same notation as in Example \ref{exa:D4} we have that $ag=x^2+y^2=\plus\in S_2^+$ and $ah=x^2-y^2=\plus\in S_2^-$. Also, the minimal generators of the ring of invariants $\C[x,y,z]^G$ are the polynomials:
{\small
\begin{align*}
z^3 &= jjj				& z\plus^4 &= aefbaefbj		& x^6y^6 &= (adcbadcbadcb)/2^6 \\
z^2\plus^2 &= aefbjj	& x^4y^4z &= (adcbadcbj)/2^4		& x^4y^4\plus^2 &= (adcbadcbaefb)/2^4 \\
x^2y^2z^2 & = (adcbjj)/4		& x^2y^2z\plus^2 &= (adcbaefbj)/4	& \plus^6 &= aefbaefbaefb \\
	xy\plus\minus &= -(aefdcb)/2	& 							& 
\end{align*}
}
which coincide with the nonzero independent loops at the vertex $0+$ of $Q$ (compare with the choice of generators used in \cite[$\S$2.3.1]{YY}). Other elements in some of the rest coinvariant algebras for $G$ are shown in Table \ref{exa:Dn:Coinv} as well as their corresponding maps in $Q$.

{\small
\begin{table}[h]
\begin{center}
\begin{tabular}{|c|l|l||c|l|l|}
\hline
$S_2^+$ 	& $\plus$ & ae 				& $S_2^-$ 	& $\minus$ &  ag \\
			& $2xyz\minus$ & adcem 		& & $2xyz\plus$ & adcgn \\
			& $-2xy\minus^3$ & aghbaghe & & $2xy\plus^3$ & aefbaefg \\
\hline
$S_4^+$ 	& $z$ & j 				& $S_4^-$ 	& $-\plus\minus$ & aefd \\
			& $4x^2y^2$ & adcb 		& & $2xyz^2$ & adkk \\
			& $\plus^2$ & aefb 		& & $2xy\plus^4$ & aefbaefbad \\
			& $-\minus^2$ & aghb 	& & $2xy\minus^4$ & aghbaghbad \\
\hline
$S_1$ 	& $(x,y)$ & a 					& $S_9$ 	& $(2x^2y,-2xy^2)$	 & adc \\
		& $(yz\plus,-xz\plus)$ & aemf 		& & $(y\plus,-x\plus)$	& aef  \\
	 	& $(yz\minus,xz\minus)$ & agnh 	& & $(y\minus,x\minus)$ & agh \\
 		& $(y\plus^3, -x\plus^3)$ &  aefbaef	& &  $(xz^2,yz^2)$ & jja \\
	 	& $(-y\minus^3,-x\minus^3)$ & aghbagh	& & $(x\plus^3\minus,-y\plus^3\minus)$ & aefbaghdc \\
 		& $(y\plus^2\minus, x\plus^2\minus)$ & aefbagh	& & $(x\plus^4,y\plus^4)$ & aefbaefba\\	
\hline
\end{tabular}
\end{center}
\caption{Elements in some of the coinvariant algebras for the group $G$ and their corresponding paths in $Q$,  where $\plus:=x^2+y^2$ and $\minus:=x^2-y^2$.}
\label{exa:Dn:Coinv}
\end{table}%
}

Analogous computations as in Example \ref{exa:D4} give an affine open cover of $Y=\Hilb{\mathbb{D}_{5,2}}{\C^3}$ consisting of 15 open sets, $U_1,\ldots, U_{15}$, which are either copies of $\C^3_{a,b,c}$ or nonsingular hypersurfaces $f\subset\C^4_{a,b,c,d}$. The linear independent paths (or open conditions) for each open set are listed in Table \ref{exa:Dn:OpenConds}, and the equations and local coordinates are shown in Table \ref{exa:Dn:Opens}.

{\small
\begin{table}[h]
\begin{center}
\begin{tabular}{cl}
Open & Linearly independent paths in $Q$ (open conditions) \\
\hline
$U_1$ 	&  $a_0=c_1=a_2=(1,0)$, $h_0=f_1=h_2=(0,1)$, $f_0b_0=f_0d_0=a_0e_0=a_0g_0=1$, \\
		& $f_1b_1=f_1d_1=a_1e_1=a_1g_1=1$, $h_2d_2=a_2e_2=a_2g_2=1$ \\
\hline
$U_2$ 	&  $a_0=a_1=a_2=(1,0)$, $f_0=f_1=f_2=(0,1)$, $j_1=k_1=j_2=1$, \\			& $a_0e_0=a_0g_0=1$, $f_1d_1=a_1e_1=a_1g_1=1$, $f_2d_2=a_1e_2=a_2g_2=1$ \\
\hline
$U_3$ 	&  $a_0=a_1=a_2=(1,0)$, $f_0=f_1=f_2=(0,1)$, $k_0=m_0=n_0=j_1=j_2=1$, \\
		& $f_0d_0=a_0e_0=a_0g_0=1$, $f_1d_1=a_1e_1=a_1g_1=1$\\
\hline
$U_4$ 	& $a_0=c_1=a_2=(1,0)$,	$f_0=f_1=h_2=(0,1)$, $k_0=m_0=n_0=j_1=j_2=1$, \\
		& $a_0e_0=a_0g_0=a_0d_0=1$, $f_1d_1=a_1e_1=a_1g_1=1$ \\
\hline
$U_5$ 	&  $a_0=c_1=a_2=(1,0)$,	$f_0=f_1=h_2=(0,1)$,
	 	    $f_0b_0=a_0d_0=a_0e_0=a_0g_0=1$, \\
		& $f_1b_1=f_1d_1=c_1e_1=c_1g_1=1$, $h_2d_2=a_2e_2=a_2g_2=1$ \\
\hline
$U_6$ 	&  $a_0=a_1=a_2=(1,0)$,	$f_0=f_1=f_2=(0,1)$,
	 	    $f_0b_0=a_0d_0=a_0e_0=f_0g_0=1$, \\
		& $f_1b_1=a_1d_1=a_1e_1=f_1g_1=1$, $f_2d_2=a_2e_2=f_2g_2=1$ \\
\hline
$U_7$ 	&  $a_0=a_1=a_2=(1,0)$, $f_0=f_1=f_2=(0,1)$, $j_1=n_1=j_2=1$ \\
		& $a_0d_0=a_0e_0=f_0g_0=1$, $a_1d_1=a_1e_1=f_1g_1=1$, $a_2d_2=a_2e_2=f_2g_2=1$\\
\hline
$U_8$ 	&  $a_0=a_1=a_2=(1,0)$, $f_0=f_1=f_2=(0,1)$, $j_1=j_2=1$, \\
		& $a_0d_0=a_0e_0=a_0g_0=1$, $a_1d_1=a_1e_1=f_1g_1=1$, $a_2d_2=a_2e_2=a_2g_2=1$\\
\hline
$U_9$ 	&  $a_0=a_1=a_2=(1,0)$, $f_0=f_1=f_2=(0,1)$, $k_0=m_0=n_0=j_1=j_2=1$, \\
		& $a_0d_0=a_0e_0=a_0g_0=1$, $a_1d_1=a_1e_1=a_1g_1=1$\\
\hline
$U_{10}$ 	& $a_0=c_1=a_2=(1,0)$, $h_0=h_1=f_2=(0,1)$, $h_0b_0=a_0d_0=a_0e_0=a_0g_0=1$\\
		&  $h_1b_1=a_1d_1=a_1e_1=a_1g_1=1$,  $f_2d_2=a_2e_2=a_2g_2=1$\\
\hline
$U_{11}$ 	& $a_0=a_1=a_2=(1,0)$, $h_0=h_1=h_2=(0,1)$, $h_0b_0=a_0d_0=h_0e_0=a_0g_0=1$\\
		&  $h_1b_1=a_1d_1=h_1e_1=a_1g_1=1$,  $a_2d_2=h_2e_2=a_2g_2=1$\\
\hline
$U_{12}$ 	& $a_0=a_1=a_2=(1,0)$, $h_0=h_1=h_2=(0,1)$, $a_0d_0=a_0g_0=1$, \\
		& $h_1e_1=1$, $k_0=m_0=n_0=j_1=m_1=j_2=k_2=n_2=1$\\
\hline
$U_{13}$ 	&  $a_0=a_1=a_2=(1,0)$, $h_0=h_1=h_2=(0,1)$, $k_0=n_0=j_1=j_2=k_2=n_2=1$, \\
		& $a_0d_0=a_0e_0=a_0g_0=1$, $h_1e_1=1$, $a_2e_2=1$ \\
\hline
$U_{14}$ 	&  $a_0=c_1=a_2=(1,0)$, $h_0=h_1=f_2=(0,1)$, $k_0=j_1=j_2=1$, \\
		& $a_0d_0=a_0e_0=a_0g_0=1$, $h_1d_1=a_1e_1=a_1g_1=1$, $a_2e_2=a_2g_2=1$ \\
\hline
$U_{15}$ 	&  $a_0=a_1=a_2=(1,0)$, $h_0=h_1=h_2=(0,1)$, $k_0=j_1=j_2=1$, \\
		& $a_0d_0=a_0e_0=a_0g_0=1$, $a_1d_1=a_1e_1=a_1g_1=1$, $a_2e_2=a_2g_2=1$ \\
\hline
\end{tabular}
\end{center}
\caption{Open conditions for each open set $U_i\subset\Hilb{\mathbb{D}_{5,2}}{\C^3}$,  for $i=1,\ldots,15$.}
\label{exa:Dn:OpenConds}
\end{table}%
}

{\scriptsize
\begin{table}[h]
\begin{center}
\begin{tabular}{ccccccc}
Open & $f\subset\C^4_{a,b,c,d}$ & a & b & c & d & Socle at $(0,0,0)$ \\
\hline
$U_1$ 		&  $d(a^2c+1)=ac$ & $\frac{2xy}{\plus^3\minus}$ & $\frac{z}{\plus^2}$ & $\plus^6$ & $2xy\plus\minus$  & $\rho^-_0$ \\
$U_2$ 		&  $d(1-a^2bc)=ab$ & $\frac{2xy}{z\plus\minus}$ & $\frac{\minus^2}{z}$ & $z^3$ & $\frac{2xy\plus}{z^2\minus}$ &  $\rho^-_0$ \\
$U_3$ 		& $d(ac+1)=1$ & $\frac{2xyz^2}{\plus\minus}$ & $\frac{z\plus\minus}{2xy}$ & $\frac{2xy\plus}{z^2\minus}$ & $\frac{\minus^2}{\plus^2}$ & $\rho^-_0$, $\rho^-_4$, $V_5$ \\
$U_4$ 		& $\C^3_{a,b,c}$ & $\frac{\plus^2}{z}$ & $\frac{z^2\plus}{2xy\minus}$ & $\frac{\minus^2}{\plus^2}$ & & $\rho^-_6$, $\rho^+_8$, $V_1$, $V_5$\\
$U_5$ 		& $d(1-c)=ac$ & $\frac{2xy\plus^3}{\minus}$ & $\frac{z}{\plus^2}$ & $\frac{\minus^2}{\plus^2}$ & $\frac{\plus^3\minus}{2xy}$ & $\rho^-_6$, $\rho^+_8$ \\
$U_6$ 		& $d(ab^2+1)=1$ & $\plus^6$ & $\frac{\minus}{2xy\plus^3}$ & $\frac{z}{\plus^2}$ & $\frac{4x^2y^2}{\plus^2}$ & $\rho^-_6$\\
$U_7$ 		& $d(ab^2c+1)=1$ & $\frac{\plus^2}{z}$ & $\frac{\minus}{2xyz\plus}$ & $z^3$ & $\frac{4x^2y^2}{\plus^2}$ & $\rho^-_6$ \\
$U_8$ 		& $d(ab+1)=ab$ & $\frac{\plus\minus}{2xyz^2}$ & $\frac{z^2\minus}{2xy\plus}$ & $\frac{2xyz\plus}{\minus}$ & $\frac{\minus^2}{\plus^2}$ & $\rho^-_6$, $\rho^-_{10}$, $V_5$ \\
$U_9$ 		& $d(1-c)=ac$ & $\frac{\plus\minus}{2xyz^2}$ & $z^3$ & $\frac{4x^2y^2}{\plus^2}$ & $\frac{2xy\plus}{z^2\minus}$ & $\rho^-_{10}$, $V_5$ \\
$U_{10}$ 	& $d(c+1)=ac$ & $\frac{\plus\minus^3}{2xy}$ & $\frac{z}{\minus^2}$ & $\frac{4x^2y^2}{\minus^2}$ & $\frac{2xy\minus^3}{\plus}$ & $\rho^+_6$, $\rho^+_8$ \\
$U_{11}$ 	& $d(ab^2-1)=1$ & $\minus^6$ & $\frac{\plus}{2xy\minus^3}$ & $\frac{z}{\minus^2}$ & $\frac{4x^2y^2}{\minus^2}$ & $\rho^+_6$\\
$U_{12}$ 	& $d(ab^2c-1)=1$ & $\frac{\minus^2}{z}$ & $\frac{\plus}{2xyz\minus}$ & $z^3$ & $\frac{4x^2y^2}{\minus^2}$ & $\rho^+_6$\\
$U_{13}$ 	& $d(ab-1)=1$ & $\frac{\plus\minus}{2xyz^2}$ & $\frac{z^2\plus}{2xy\minus}$ & $\frac{2xyz\minus}{\plus}$ & $\frac{4x^2y^2}{\minus^2}$ & $\rho^+_6$, $\rho^+_{10}$, $V_5$ \\
$U_{14}$ 	& $\C^3_{a,b,c}$ & $\frac{\minus^2}{z}$ & $\frac{z^2\minus}{2xy\plus}$ & $\frac{\plus^2}{\minus^2}$ & & $\rho^+_6$, $\rho^+_8$, $V_1$, $V_5$ \\
$U_{15}$ 	& $d(c+1)=ac$ & $\frac{\plus\minus}{2xyz^2}$ & $b=z^3$ & $\frac{4x^2y^2}{\minus^2}$ & $\frac{2xy\minus}{z^2\plus}$ & $\rho^+_{10}$, $V_5$ \\
\hline
\end{tabular}
\end{center}
\caption{Open cover of $\Hilb{G}{\C^3}=\bigcup_{i=1}^{15}U_i$ with their local coordinates and socles at the origin of every $U_i$.}
\label{exa:Dn:Opens}
\end{table}%
}

The configuration of the exceptional locus in $Y$ is shown in Figure \ref{exa:Dn:bento} (left), which resembles the shape of a {\em bento box}. It consists of five irreducible compact divisors $E_1,\ldots, E_5$, which agrees with the number of conjugacy classes of age 2 as expected (see \cite[1.6]{IR}).

\begin{figure}[htbp]
\begin{center}
\begin{tikzpicture}

\node at (0,0) {
\begin{tikzpicture}[scale=0.4,inner sep = 0pt]
\node (u1) at (4,3) {\scriptsize $U_1$};
\node (u2) at (5.5,4.5) {\scriptsize $U_2$};
\node (u3) at (7,4) {\scriptsize $U_3$};
\node (u4) at (3,2) {\scriptsize $U_4$};
\node (u5) at (0,0) {\scriptsize $U_5$};
\node (u6) at (0,3) {\scriptsize $U_6$};
\node (u7) at (1.5,4.5) {\scriptsize $U_7$};
\node (u8) at (3,4) {\scriptsize $U_8$};
\node (u9) at (6,6) {\scriptsize $U_9$};
\node (u10) at (8,0) {\scriptsize $U_{10}$};
\node (u11) at (8,3) {\scriptsize $U_{11}$};
\node (u12) at (9.5,4.5) {\scriptsize $U_{12}$};
\node (u13) at (11,4) {\scriptsize $U_{13}$};
\node (u14) at (11,2) {\scriptsize $U_{14}$};
\node (u15) at (14,6) {\scriptsize $U_{15}$};

\node (E2) at (1.5,2.5) {\color{red}{$E_2$}};
\node (E4) at (5.5,2.5) {\color{red}{$E_4$}};
\node (E3) at (9.5,2.5) {\color{red}{$E_3$}};
\node (E1) at (7,1) {\color{red}{$E_1$}};
\node (E5) at (6.75,5) {\color{red}{$E_5$}};

\draw (4,2) -- (u4) -- (u5) -- (u10) -- (u14);
\draw (7,2) -- (8,2);
\draw (4,0) -- (u1) -- (u2) -- (u3) -- (7,2);
\draw (u10) -- (u11) -- (u12) -- (u13) -- (u14);
\draw (7,2) -- (4,0);
\draw (u5) -- (u6) -- (u7) -- (u8) -- (u4);
\draw (u8) -- (u9) -- (u15) -- (u13);
\draw (u3) -- (10,6);
\end{tikzpicture}
};

\node at (5.5,0) {
\begin{tikzpicture}[scale=0.4,inner sep = 0pt]
\coordinate (u1) at (4,3);
\coordinate (u2) at (5.5,4.5);
\coordinate (u3) at (7,4);
\coordinate (u4) at (3,2);
\coordinate (u5) at (0,0);
\coordinate (u6) at (0,3);
\coordinate (u7) at (1.5,4.5);
\coordinate (u8) at (3,4);
\coordinate (u9) at (6,6);
\coordinate (u10) at (8,0);
\coordinate (u11) at (8,3);
\coordinate (u12) at (9.5,4.5);
\coordinate (u13) at (11,4);
\coordinate (u14) at (11,2);
\coordinate (u15) at (14,6);
\coordinate [label=center:{\color{red}{\small$6-$}}] (E2) at (1.5,2.5);
\coordinate [label=center:{\color{red}{\small$0-$}}] (E4) at (5.5,2.5);
\coordinate [label=center:{\color{red}{\small$6+$}}] (E3) at (9.5,2.5);
\coordinate [label=center:{\color{red}{\small$8+$}}] (E1) at (7,1);
\coordinate [label=center:{\color{red}{\small$5$}}](E5) at (6.75,5);

\node (e1) at (2,0) [gap,inner sep=1pt] {\tiny$4+$};
\node (e2) at (6,0) [gap,inner sep=1pt] {\tiny$4+$};
\node (e3) at (1.5,1) [gap,inner sep=1pt] {\tiny$4+$};
\node (e4) at (5.5,1) [gap,inner sep=1pt] {\tiny$4+$};
\node (e5) at (9.5,1) [gap,inner sep=1pt] {\tiny$4+$};
\node (e6) at (0,1.25) [gap,inner sep=1pt] {\tiny$2-$};
\node (e7) at (4,1.25) [gap,inner sep=1pt] {\tiny$8-$};
\node (e8) at (8,1.25) [gap,inner sep=1pt] {\tiny$2+$};
\node (e9) at (0.75,3.75) [gap,inner sep=1pt] {\tiny$4+$};
\node (e10) at (4.75,3.75) [gap,inner sep=1pt] {\tiny$4+$};
\node (e11) at (8.75,3.75) [gap,inner sep=1pt] {\tiny$4+$};
\node (e12) at (3.5,2) [gap,inner sep=0pt] {\tiny$1$};
\node (e13) at (7.5,2) [gap,inner sep=0pt] {\tiny$1$};
\node (e14) at (3,3) [gap,inner sep=2pt] {\tiny$9$};
\node (e15) at (7,3) [gap,inner sep=2pt] {\tiny$9$};
\node (e16) at (11,3) [gap,inner sep=2pt] {\tiny$9$};
\node (e17) at (2.25,4.25) [gap,inner sep=1pt] {\tiny$2-$};
\node (e18) at (6.25,4.25) [gap,inner sep=1pt] {\tiny$8-$};
\node (e19) at (10.25,4.25) [gap,inner sep=1pt] {\tiny$2+$};
\node (e20) at (4.5,5) [gap,inner sep=0.7pt] {\tiny$10-$};
\node (e21) at (8.5,5) [gap,inner sep=1pt] {\tiny$4-$};
\node (e22) at (12.5,5) [gap,inner sep=0.7pt] {\tiny$10+$};
\node (e23) at (8,6) [gap,inner sep=2pt] {\tiny$9$};
\node (e24) at (12,6) [gap,inner sep=2pt] {\tiny$9$};
\draw (u5) -- (e1) -- (e2) -- (u10) -- (e5) -- (u14) -- (e16) --
   	(u13) -- (e22) -- (u15) -- (e24) -- (e23) -- (u9) -- (e20) --
	(u8) -- (e17) -- (u7) -- (e9) -- (u6) -- (e6) -- (u5);
\draw (4,0) -- (e7) -- (u1) -- (e10) -- (u2) -- (e18) -- (u3) -- 
	(e15) -- (7,2) -- (e4) -- (4,0);
\draw (u10) -- (e8) -- (u11) -- (e11) -- (u12) -- (e19) -- (u13);
\draw (u5) -- (e3) -- (u4) -- (e14) -- (u8);
\draw (u4) -- (e12) -- (4,2);
\draw (7,2) -- (e13) -- (8,2);
\draw (u3) -- (e21) -- (10,6);
\end{tikzpicture}
};
\end{tikzpicture}
\vspace{-0.25cm}
\caption{Exceptional locus in $\Hilb{G}{\C^3}$ (left) and the marking of each irreducible component (right).}
\label{exa:Dn:bento}
\end{center}
\end{figure}
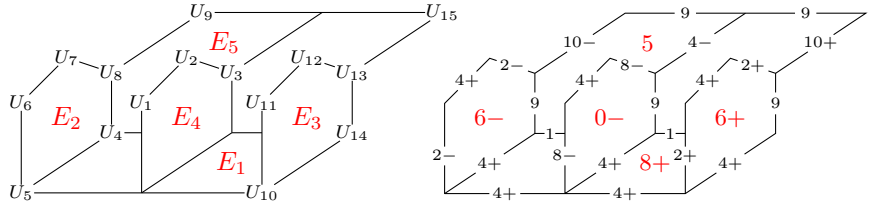

The marking of the irreducible components is shown in Figure \ref{exa:Dn:bento} (right). For the marking of compact divisors the irreducible representations containing elements in the socles at the origin of every open set are shown in Table \ref{exa:Dn:Opens}. The socles are detected from the representation space of $Q$ at an open set $U$ by setting all local parameters equal to zero, and looking at the vertices (representations) where every arrow going out is the zero map. For example, for $U_8$ we see that every arrow out of $\rho_6^-$ and $\rho_{10}^-$, as well as for the basis element $e_2\in\C^2_5$. This means that $Soc(U_8)=\{z\minus,2xy\plus,(yz^\plus,-xz^2\plus)\}$. The calculation with the rest of the open sets give us that $E_1$ is marked with $\rho_8^+$, $E_2$ with $\rho_6^-$, $E_3$ with $\rho_6^+$, $E_4$ with $\rho_0^-$ and $E_5$ with $V_5$. 

For the marking of rational curves $C\subset Y$ the basis of sections of the fibres of $\RR$ along the open covering of $Y$ are shown in Table \ref{exa:Dn:basisRi}. From them one can compute the $G$-igsaw transformation modules $G$-igMod$_{C}$. To obtain the elements of the table, let us say that for $U_i$, $i\neq1$, every basis element is obtained directly using the open conditions in Table \ref{exa:Dn:OpenConds}. For the open set $U_1$ we have a relation in $S_4^+$ of the form 
\[\minus^2=-(ad-1)\plus^2\]
Since the equation of $U_1$ can be written as $ac(ad-1)=-d$ se can say that $ad-1\neq0$ on $U_1$, which implies that we can choose either $\plus^2$ or $\minus^2$ as a basis element in $\RR$.

Let us explain the calculations in detail for the open sets $U_7$ and $U_8$. The choices of nonzero arrows in order to generate the whole module from $S_0^+$ for both open sets are shown in Figure \ref{exa:Dn:U7U8}. Following these arrows, it can be checked that starting with $1\in S^+_0$ and using the maps in (\ref{exa:Dn:xyarrows}) we produce the basis elements in each $\RR_i$ that appear in Table \ref{exa:Dn:basisRi}.

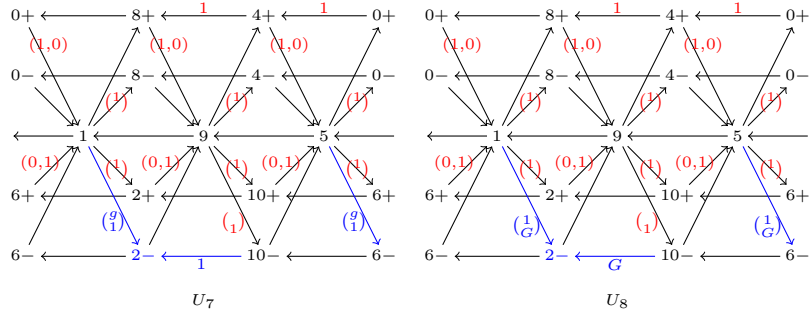
\begin{figure}[htbp]
\begin{center}
\begin{tikzpicture}

\node (n1) at (1,0) {
\begin{tikzpicture}[scale=0.4,outer sep=1pt,inner sep=1pt]
\node (0pizq) at (0,8) {\tiny $0+$};
\node (0mizq) at (0,6) {\tiny $0-$};
\node (izq) at (-0.5,4) {};
\node (6pizq) at (0,2) {\tiny $6+$};
\node (6mizq) at (0,0) {\tiny $6-$};
\node (1) at (2,4) {\tiny $1$};
\node (8p) at (4,8) {\tiny $8+$};
\node (8m) at (4,6) {\tiny $8-$};
\node (2p) at (4,2) {\tiny $2+$};
\node (2m) at (4,0) {\color{blue}\tiny $2-$};
\node (9) at (6,4) {\tiny $9$};
\node (4p) at (8,8) {\tiny $4+$};
\node (4m) at (8,6) {\tiny $4-$};
\node (10p) at (8,2) {\tiny $10+$};
\node (10m) at (8,0) {\tiny $10-$};
\node (5) at (10,4) {\tiny $5$};
\node (0pder) at (12,8) {\tiny $0+$};
\node (0mder) at (12,6) {\tiny $0-$};
\node (der) at (12.5,4) {};
\node (6pder) at (12,2) {\tiny $6+$};
\node (6mder) at (12,0) {\tiny $6-$};
\node (u7) at (6,-1.5) {\tiny $U_7$};

\draw [->] (0pder) to node[above, inner sep=0pt] {\color{red}\tiny $1$} (4p);
\draw [->] (0mder) to node[above, inner sep=0pt] {} (4m);
\draw [->] (der) to node[pos=0.2, inner sep=-2pt] {} (5);
\draw [->] (6pder) to node[below, inner sep=-2pt] {} (10p);
\draw [->] (6mder) to node[below, inner sep=-2pt] {} (10m);

\draw [->] (4p) to node[above, inner sep=0pt] {\color{red}\tiny $1$} (8p);
\draw [->] (4m) to node[above, inner sep=-2pt] {} (8m);
\draw [->] (5) to node[inner sep=-2pt] {} (9);
\draw [->] (10p) to node[below, inner sep=-2pt] {} (2p);
\draw [->,blue] (10m) to node[below, inner sep=0pt] {\tiny $1$} (2m);

\draw [->] (8p) to node[above, inner sep=0pt] {} (0pizq);
\draw [->] (8m) to node[above, inner sep=0pt] {} (0mizq);
\draw [->] (9) to node[inner sep=-2pt] {} (1);
\draw [->] (1) to node[pos=0.8, inner sep=-2pt] {} (izq);
\draw [->] (2p) to node[below, inner sep=0pt] {} (6pizq);
\draw [->] (2m) to node[below, inner sep=0pt] {} (6mizq);

\draw [->] (0pizq) to node[above, pos=0.4, inner sep=4pt] {\color{red}\tiny $(1,\!0)$} (1);
\draw [->] (0mizq) to node[below, pos=0.4, inner sep=0pt] {} (1);
\draw [->] (6pizq) to node[above,pos=0.15, inner sep=2pt] {\color{red}\tiny $(0,\!1)$} (1);
\draw [->] (6mizq) to node[below, pos=0.4, inner sep=5pt] {} (1);

\draw [->] (1) to node[above, inner sep=5pt] {} (8p);
\draw [->] (1) to node[pos=0.6, inner sep=0pt] {\color{red}\tiny $\left(\!\begin{smallmatrix}1\\\phantom{0}\end{smallmatrix}\!\right)$} (8m);
\draw [->] (1) to node[pos=0.6, inner sep=0pt] {\color{red}\tiny $\left(\!\begin{smallmatrix}1\\\phantom{0}\end{smallmatrix}\!\right)$} (2p);
\draw [->,blue] (1) to node[below, inner sep=5pt] {\tiny $\left(\!\begin{smallmatrix}g\\1\end{smallmatrix}\!\right)$} (2m);

\draw [->] (8p) to node[above, pos=0.4, inner sep=4pt] {\color{red}\tiny $(1,\!0)$} (9);
\draw [->] (8m) to node[below, pos=0.4, inner sep=0pt] {} (9);
\draw [->] (2p) to node[above,pos=0.15, inner sep=2pt] {\color{red}\tiny $(0,\!1)$} (9);
\draw [->] (2m) to node[below, pos=0.4, inner sep=5pt] {} (9);

\draw [->] (9) to node[above, inner sep=5pt] {} (4p);
\draw [->] (9) to node[pos=0.6, inner sep=0pt] {\color{red}\tiny $\left(\!\begin{smallmatrix}1\\\phantom{0}\end{smallmatrix}\!\right)$} (4m);
\draw [->] (9) to node[pos=0.6, inner sep=0pt] {\color{red}\tiny $\left(\!\begin{smallmatrix}1\\\phantom{0}\end{smallmatrix}\!\right)$} (10p);
\draw [->] (9) to node[below, inner sep=5pt] {\color{red}\tiny $\left(\!\begin{smallmatrix}\phantom{0}\\1\end{smallmatrix}\!\right)$} (10m);

\draw [->] (4p) to node[above, pos=0.4, inner sep=4pt] {\color{red}\tiny $(1,\!0)$} (5);
\draw [->] (4m) to node[below, pos=0.4, inner sep=0pt] {} (5);
\draw [->] (10p) to node[above,pos=0.15, inner sep=2pt] {\color{red}\tiny $(0,\!1)$} (5);
\draw [->] (10m) to node[below, pos=0.4, inner sep=5pt] {} (5);

\draw [->] (5) to node[above, inner sep=4pt] {} (0pder);
\draw [->] (5) to node[pos=0.6, inner sep=0pt] {\color{red}\tiny $\left(\!\begin{smallmatrix}1\\\phantom{0}\end{smallmatrix}\!\right)$} (0mder);
\draw [->] (5) to node[pos=0.6, inner sep=0pt] {\color{red}\tiny $\left(\!\begin{smallmatrix}1\\\phantom{0}\end{smallmatrix}\!\right)$} (6pder);
\draw [->,blue] (5) to node[below, inner sep=5pt] {\tiny $\left(\!\begin{smallmatrix}g\\1\end{smallmatrix}\!\right)$} (6mder);
\end{tikzpicture}

};

\node (n1) at (6.5,0) {
\begin{tikzpicture}[scale=0.4,outer sep=1pt,inner sep=1pt]
\node (0pizq) at (0,8) {\tiny $0+$};
\node (0mizq) at (0,6) {\tiny $0-$};
\node (izq) at (-0.5,4) {};
\node (6pizq) at (0,2) {\tiny $6+$};
\node (6mizq) at (0,0) {\tiny $6-$};
\node (1) at (2,4) {\tiny $1$};
\node (8p) at (4,8) {\tiny $8+$};
\node (8m) at (4,6) {\tiny $8-$};
\node (2p) at (4,2) {\tiny $2+$};
\node (2m) at (4,0) {\color{blue}\tiny $2-$};
\node (9) at (6,4) {\tiny $9$};
\node (4p) at (8,8) {\tiny $4+$};
\node (4m) at (8,6) {\tiny $4-$};
\node (10p) at (8,2) {\tiny $10+$};
\node (10m) at (8,0) {\tiny $10-$};
\node (5) at (10,4) {\tiny $5$};
\node (0pder) at (12,8) {\tiny $0+$};
\node (0mder) at (12,6) {\tiny $0-$};
\node (der) at (12.5,4) {};
\node (6pder) at (12,2) {\tiny $6+$};
\node (6mder) at (12,0) {\tiny $6-$};
\node (u8) at (6,-1.5) {\tiny $U_8$};

\draw [->] (0pder) to node[above, inner sep=0pt] {\color{red}\tiny $1$} (4p);
\draw [->] (0mder) to node[above, inner sep=0pt] {} (4m);
\draw [->] (der) to node[pos=0.2, inner sep=-2pt] {} (5);
\draw [->] (6pder) to node[below, inner sep=-2pt] {} (10p);
\draw [->] (6mder) to node[below, inner sep=-2pt] {} (10m);

\draw [->] (4p) to node[above, inner sep=0pt] {\color{red}\tiny $1$} (8p);
\draw [->] (4m) to node[above, inner sep=-2pt] {} (8m);
\draw [->] (5) to node[inner sep=-2pt] {} (9);
\draw [->] (10p) to node[below, inner sep=-2pt] {} (2p);
\draw [->,blue] (10m) to node[below, inner sep=0pt] {\tiny $G$} (2m);

\draw [->] (8p) to node[above, inner sep=0pt] {} (0pizq);
\draw [->] (8m) to node[above, inner sep=0pt] {} (0mizq);
\draw [->] (9) to node[inner sep=-2pt] {} (1);
\draw [->] (1) to node[pos=0.8, inner sep=-2pt] {} (izq);
\draw [->] (2p) to node[below, inner sep=0pt] {} (6pizq);
\draw [->] (2m) to node[below, inner sep=0pt] {} (6mizq);

\draw [->] (0pizq) to node[above, pos=0.4, inner sep=4pt] {\color{red}\tiny $(1,\!0)$} (1);
\draw [->] (0mizq) to node[below, pos=0.4, inner sep=0pt] {} (1);
\draw [->] (6pizq) to node[above,pos=0.15, inner sep=2pt] {\color{red}\tiny $(0,\!1)$} (1);
\draw [->] (6mizq) to node[below, pos=0.4, inner sep=5pt] {} (1);

\draw [->] (1) to node[above, inner sep=5pt] {} (8p);
\draw [->] (1) to node[pos=0.6, inner sep=0pt] {\color{red}\tiny $\left(\!\begin{smallmatrix}1\\\phantom{0}\end{smallmatrix}\!\right)$} (8m);
\draw [->] (1) to node[pos=0.6, inner sep=0pt] {\color{red}\tiny $\left(\!\begin{smallmatrix}1\\\phantom{0}\end{smallmatrix}\!\right)$} (2p);
\draw [->,blue] (1) to node[below, inner sep=6.5pt] {\tiny $\left(\!\begin{smallmatrix}1\\G\end{smallmatrix}\!\right)$} (2m);

\draw [->] (8p) to node[above, pos=0.4, inner sep=4pt] {\color{red}\tiny $(1,\!0)$} (9);
\draw [->] (8m) to node[below, pos=0.4, inner sep=0pt] {} (9);
\draw [->] (2p) to node[above,pos=0.15, inner sep=2pt] {\color{red}\tiny $(0,\!1)$} (9);
\draw [->] (2m) to node[below, pos=0.4, inner sep=5pt] {} (9);

\draw [->] (9) to node[above, inner sep=5pt] {} (4p);
\draw [->] (9) to node[pos=0.6, inner sep=0pt] {\color{red}\tiny $\left(\!\begin{smallmatrix}1\\\phantom{0}\end{smallmatrix}\!\right)$} (4m);
\draw [->] (9) to node[pos=0.6, inner sep=0pt] {\color{red}\tiny $\left(\!\begin{smallmatrix}1\\\phantom{0}\end{smallmatrix}\!\right)$} (10p);
\draw [->] (9) to node[below, inner sep=5pt] {\color{red}\tiny $\left(\!\begin{smallmatrix}\phantom{0}\\1\end{smallmatrix}\!\right)$} (10m);

\draw [->] (4p) to node[above, pos=0.4, inner sep=4pt] {\color{red}\tiny $(1,\!0)$} (5);
\draw [->] (4m) to node[below, pos=0.4, inner sep=0pt] {} (5);
\draw [->] (10p) to node[above,pos=0.15, inner sep=2pt] {\color{red}\tiny $(0,\!1)$} (5);
\draw [->] (10m) to node[below, pos=0.4, inner sep=5pt] {} (5);

\draw [->] (5) to node[above, inner sep=4pt] {} (0pder);
\draw [->] (5) to node[pos=0.6, inner sep=0pt] {\color{red}\tiny $\left(\!\begin{smallmatrix}1\\\phantom{0}\end{smallmatrix}\!\right)$} (0mder);
\draw [->] (5) to node[pos=0.6, inner sep=0pt] {\color{red}\tiny $\left(\!\begin{smallmatrix}1\\\phantom{0}\end{smallmatrix}\!\right)$} (6pder);
\draw [->,blue] (5) to node[below, inner sep=6.5pt] {\tiny $\left(\!\begin{smallmatrix}1\\G\end{smallmatrix}\!\right)$} (6mder);
\end{tikzpicture}
};
\end{tikzpicture}
\vspace{-0.5cm}
\caption{Choices of nonzero arrows for $U_7$ and $U_8$. It is highlighted in blue the difference in both sets, where $g$ and $G$ are the (inverse) coordinates of the curve $\LL\cong\PP^1$ covered by $U_7\cup U_8$.}
\label{exa:Dn:U7U8}
\end{center}
\end{figure}

By setting the choices of Figure \ref{exa:Dn:U7U8} (left) and using the relations of the quiver we obtain $U_7$ as the hypersurface $C(Bg^2j+1)=1$ in $\C^4$. This equation tells us that $C$ can be written as a well defined function when $B=g=j=0$ at the origin of the open set, so we can take $B$, $g$ and $j$ as the local coordinates of the affine open set $U_7$. Using again (\ref{exa:Dn:xyarrows}) we see that
\[
B = \frac{\plus^2}{z}, ~~~g = \frac{\minus}{2xyz\plus}, ~~~j=z^3, ~~~C=\frac{4x^2y^2}{\plus^2}
\]
If we restrict to the curve $\LL$ parametrized by the local coordinate $g$, that is, if we set $B=j=0$ (thus $C=1$), we see that $G$-igMod$_\LL(U_7)=\Span{xyz\plus,xyz^2\plus}$. Similarly, we obtain the open set $U_8$ as the hypersurface $H(Dn+1)=Dn$ where
\[
D = \frac{\plus\minus}{2xyz^2}, ~~~G = \frac{2xyz\plus}{\minus}, ~~~n=\frac{z^2\minus}{2xy\plus}, ~~~H=\frac{\minus^2}{\plus^2}
\]
In this open set the curve $\LL$ is parametrised by the local coordinate $G$, and we have that $G$-igMod$_\LL(U_8)=\Span{\minus,z\minus}$. The generators of these submodules are $xyz\plus$ and $\minus$ respectively, both in $S_2^-$, so the curve $\LL$ is marked with the representation $\rho_2^-$. 

Also, at this curve the quiver gives only two nonzero relations among basis elements in $S^-_2$ and $S^-_6$:
\begin{align*}
&\text{at $U_7|_{\LL}$} & &\text{at $U_8|_{\LL}$} &\\
\minus &= g\cdot2xyz\plus &2xyz\plus&=G\cdot\minus &\text{(in $S^-_2$)}\\
z\minus &= g\cdot2xyz^2\plus &2xyz^2\plus&=G\cdot z\minus &\text{(in $S^-_6$)}
\end{align*}
In both open sets the relation in $S^-_6$ is generated by the relation in $S^-_2$, having 
\[
G\cdot\minus = g\cdot2xyz\plus
\]
so we can conclude that $\LL$ is parametrised by the ratio $(g:G)=(\minus:2xyz\plus)$.  \\

{\scriptsize
\begin{table}[htp]
\begin{center}
\setlength{\tabcolsep}{1pt}
\renewcommand{\arraystretch}{1.5}
\begin{tabular}{cccccccccccccccc}
 & $\RR^+_0$ & $\RR^-_0$ & $\RR^+_2$ & $\RR^-_2$ & $\RR^+_4$ & $\RR^-_4$ & $\RR^+_6$ & $\RR^-_6$ & $\RR^+_8$ & $\RR^-_8$ & $\RR^+_{10}$ & $\RR^-_{10}$ & $\det\RR_1$ & $\det\RR_5$ & $\det\RR_9$ \\
\hline
$\ZZ_1$ &  $1$ & $\plus^5\minus$ &  $\plus$ & $\minus$ & $\plus^2$ & $\plus\minus $& $\plus^3$ & $\plus^2\minus$ & $\plus^4$ & $\plus^3\minus$ & $\plus^5$ & $\plus^4\minus$ & $\plus^4$ & $\plus^8$ & $\plus^6$
\\
$\ZZ_2$ &  $1$ & $z^2\plus\minus$  & $\plus$ & $\minus$ & $z$ & $\plus\minus$ & $z\plus$ & $z\minus$ & $z^2$ & $z\plus\minus$ & $z^2\plus$ & $z^2\minus$ & $z\plus^2$ & $z^3\plus^2$ & $z^2\plus^2$
\\
$\ZZ_3$ &  $1$ & $xyz$ & $\plus$ & $\minus$ & $z$ & $\plus\minus$ & $z\plus$ & $z\minus$ & $z^2$ & $xy$ & $z^2\plus$ & $z^2\minus$ & $z\plus^2$ & $xyz\plus\minus$ & $xy\plus\minus$
\\
$\ZZ_4$ & $1$ & $xyz$ & $\plus$ & $\minus$ & $z$ & $\plus\minus$ & $z\plus$ & $z\minus$ & $z^2$ & $xy$ & $xy\minus$ & $xy\plus$ & $z\plus^2$ & $z^3\plus^2$ & $z^2\plus^2$
\\
$\ZZ_5$ & $1$ & $xy\plus^2$ & $\plus$ & $\minus$ & $\plus^2$ & $\plus\minus$ & $\plus^3$ & $\plus^2\minus$ & $\plus^4$ & $xy$ & $xy\minus$ & $xy\plus$ & $\plus^4$ & $xy\plus^3\minus$ & $xy\plus\minus$
\\
$\ZZ_6$ 		&  $1$& $xy\plus^2$ & $\plus$ & $xy\plus^3$ & $\plus^2$ & $xy\plus^4$ & $\plus^3$ & $xy\plus^5$ & $\plus^4$ & $xy$ & $\plus^5$ & $xy\plus$ & $\plus^4$ & $\plus^8$ & $\plus^6$
\\
$\ZZ_7$ 		&  $1$& $xyz$ & $\plus$ & $xyz\plus$ & $z$ & $xyz^2$ & $z\plus$ & $xyz^2\plus$ & $z^2$ & $xy$ & $z^2\plus$ & $xy\plus$ & $z\plus^2$ & $z^3\plus^2$ & $z^2\plus^2$
\\
$\ZZ_8$		&  $1$& $xyz$ & $\plus$ & $\minus$ & $z$ & $xyz^2$ & $z\plus$ & $z\minus$ & $z^2$ & $xy$ & $z^2\plus$ & $xy\plus$ & $z\plus^2$ & $z^3\plus^2$ & $z^2\plus^2$
\\
$U_9 $		&  $1$& $xyz$ & $\plus$ & $\minus$ & $z$ & $xyz^2$ & $z\plus$ & $z\minus$ & $z^2$ & $xy$ & $z^2\plus$ & $z^2\minus$  & $z\plus^2$ & $z^3\plus^2$ & $z^2\plus^2$
\\
$\ZZ_{10}$ 	&  $1$& $xy\minus^2$ & $\plus$ & $\minus$ & $\minus^2$ & $\plus\minus$ & $\plus\minus^2$ & $\minus^3$ & $\minus^4$ & $xy$ & $xy\minus$ & $xy\plus$ & $\minus^4$ & $xy\plus\minus^3$ & $xy\plus\minus$
\\
$\ZZ_{11}$ 	&  $1$& $xy\minus^2$ & $xy\minus^3$ & $\minus$ & $\minus^2$ & $xy\minus^4$ & $xy\minus^5$ & $\minus^3$ & $\minus^4$ & $xy$ & $xy\minus$ & $\minus^5$ & $\minus^4$ & $\minus^8$ & $\minus^6$
\\
$\ZZ_{12}$ 	& $1$& $xyz$ & $xyz\minus$ & $\minus$ & $z$ & $xyz^2$ & $xyz^2\minus$ & $z\minus$ & $z^2$ & $xy$ & $xy\minus$ & $z^2\minus$ & $z\minus^2$ & $z^3\minus^2$ & $z^2\minus^2$
\\
$\ZZ_{13}$ 	&  $1$& $xyz$ & $\plus$ & $\minus$ & $z$ & $xyz^2$ & $z\plus$ & $z\minus$ & $z^2$ & $xy$ & $xy\minus$ & $z^2\minus$ & $z\minus^2$ & $z^3\minus^2$ & $z^2\minus^2$
\\
$\ZZ_{14}$ 	&  $1$& $xyz$ & $\plus$ & $\minus$ & $z$ & $\plus\minus$ & $z\plus$ & $z\minus$ & $z^2$ & $xy$ & $xy\minus$ & $xy\plus$ & $z\minus^2$ & $xyz\plus\minus$ & $xy\plus\minus$
\\
$\ZZ_{15}$ 	&  $1$& $xyz$ & $\plus$ & $\minus$ & $z$ & $xyz^2$ & $z\plus$ & $z\minus$ & $z^2$ & $xy$ & $z^2\plus$ & $z^2\minus$ & $z\minus^2$ & $z^3\minus^2$ & $z^2\minus^2$
\\
\hline

\end{tabular}
\end{center}
\caption{Basis of the fibres $\ZZ_i$ of each $\RR_\rho$ on every open set $U_i\subset\Hilb{G}{\C^3}$.}
\label{exa:Dn:basisRi}
\end{table}%
}

\begin{prop} Let $G=\Span{\frac{1}{12}(1,7,4),\beta}\subset\SL(3,\C)$, $\pi:Y=\Hilb{G}{\C^3}\to\C^3$ crepant and let $\{\RR_\rho\}_{\rho\in\Irr G}$ be the set of tautological bundles of $Y$. Then, 
\begin{itemize}
\item[(a)] The following relations hold in $\Pic Y$:
\begin{align*}
\RR^-_0 &= \RR^+_4 \otimes \RR^-_8	& \RR^-_6 &= \RR^+_4 \otimes \RR^-_2 & \det\RR_5 = \RR^+_4\otimes\det\RR_9 \\
\RR^+_8 &= \RR^+_4 \otimes \RR^+_4	& \RR^+_6 &= \RR^+_4 \otimes \RR^+_2 &
\end{align*}
\item[(b)] The one-to-one correspondence (\ref{intMcKay}) holds, that is, there exists a bijection between irreducible representations of $G$ and a basis of $H^*(Y,\Z)$.
\end{itemize}
\end{prop}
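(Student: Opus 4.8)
The plan is to verify (a) by a direct chart-by-chart computation and then feed those relations, together with a count of the cohomology ranks, into the bijection (b).

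\textbf{Part (a).} Each $\RR_\rho$ (for one-dimensional $\rho$) and each $\det\RR_j$ (for two-dimensional $V_j$) is a line bundle on $Y$, trivialised on the cover $\{U_i\}$ by the entries of Table \ref{exa:Dn:basisRi}; its transition function from $U_i$ to $U_j$ is the ratio of the two local sections. Hence an identity $\RR_a\cong\RR_b\otimes\RR_c$ in $\Pic Y$ holds exactly when, on every chart, the section of $\RR_a$ equals the product of the sections of $\RR_b$ and $\RR_c$. First I would read this off on $\ZZ_1$ (for instance $\plus^5\minus=\plus^2\cdot\plus^3\minus$ gives $\RR_0^-=\RR_4^+\otimes\RR_8^-$, and the remaining four are identical one-line checks), and then confirm the same multiplicative identities on the other fourteen charts. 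This is finite and routine.

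\textbf{Part (b), ranks.} Since $G\subset\SL(3,\C)$, every conjugacy class has integer age in $\{0,1,2\}$, so $Y$ carries no odd cohomology, and the cohomological McKay correspondence (via \cite{BKR}) gives $\dim H^{2k}(Y,\Q)=\#\{\text{age-}k\text{ classes}\}$. The identity (age $0$) gives $\dim H^0=1$; the five age-$2$ classes, matching the five compact divisors $E_1,\dots,E_5$, give $\dim H^4=5$; and as $\#\Irr G=\#\{\text{conjugacy classes}\}=15$, the nine remaining age-$1$ classes give $\dim H^2=9$.

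\textbf{Part (b), the bijection.} The generator of $H^0$ is $\RR_0^+\cong\OO_Y$. For $H^2$, the five relations of part (a) express $c_1(\RR_0^-),c_1(\RR_8^+),c_1(\RR_6^-),c_1(\RR_6^+)$ and $c_1(\det\RR_5)$ in terms of the nine classes attached to the curve-marking representations $\rho_2^\pm,\rho_4^\pm,\rho_8^-,\rho_{10}^\pm,V_1,V_9$; these nine therefore generate $\Pic Y$, and since $\dim H^2=9$ I would confirm they are a $\Z$-basis by pairing them against nine independent exceptional curves and checking that the matrix $\deg(c_1(\RR_\rho)|_C)$ is unimodular (each marking representation pairing to $1$ with its curve), in the spirit of (\ref{c1GSV}). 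For $H^4$, the crucial observation is that the five relations of (a) are indexed precisely by the five divisor-marking representations $\rho_0^-,\rho_8^+,\rho_6^-,\rho_6^+,V_5$; following the cookery of Example \ref{exa:Ab}, to each relation $\RR_a=\RR_b\otimes\RR_c$ I would attach $\FF_E:=(\RR_b\otimes\RR_c)\ominus(\RR_a\otimes\OO_Y)$ and prove $\deg(c_2(\FF_E)|_{E'})=\delta_{EE'}$, so the five classes $c_2(\FF_E)$ form a basis of $H^4$. Collecting $1+9+5=15$ classes in bijection with $\Irr G$ then completes the correspondence.

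\textbf{Main obstacle.} The delicate step is the $H^4$ computation. The divisors $E_i$ are non-toric rational surfaces, so verifying $\deg(c_2(\FF_E)|_{E'})=\delta_{EE'}$ requires identifying each $E_i$ explicitly and doing intersection theory on it without the toric dictionary used in Example \ref{exa:Ab}. Moreover, the representation $V_5$ marking $E_5$ is two-dimensional, so its virtual bundle must be built from the rank-two $\RR_5$ using $\det\RR_5=\RR_4^+\otimes\det\RR_9$ rather than from line bundles directly, and computing the resulting second Chern class on the surface components is the hardest part of the argument.
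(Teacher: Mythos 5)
Your proposal follows essentially the same skeleton as the paper's proof, with two differences in how the key steps are closed. Part (a) is exactly the paper's argument: the five identities are read off multiplicatively from Table \ref{exa:Dn:basisRi} chart by chart (e.g.\ $\plus^5\minus=\plus^2\cdot\plus^3\minus$ on $\ZZ_1$), and the paper likewise just asserts the check on all fifteen open sets. For part (b), both you and the paper take $\RR_0^+\cong\OO_Y$ for $H^0$, use the relations of (a) to cut the fourteen nontrivial tautological classes down to the nine curve-marking ones $\rho_2^\pm,\rho_4^\pm,\rho_8^-,\rho_{10}^\pm,V_1,V_9$, obtain the number $9$ from the conjugacy-class/age count, and form the same five virtual bundles $\FF$ for $H^4$; note the paper gets $\rk\Pic Y=|\Irr G\backslash\{\rho_0^+\}|-5=9$ from Ito--Reid \cite{IR}, not from \cite{BKR} as you suggest. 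The first difference is in $H^2$: the paper states that the tautological bundles generate $\Pic Y$, so after removing the five dependent ones, nine generators of a rank-nine free group are automatically a basis; your unimodular-pairing check of $\deg(c_1(\RR_\rho)|_C)$ against nine exceptional curves is a legitimate alternative, but it carries its own extra burden (you would need the nine curves to give a basis of $H_2(Y,\Z)$), and it is unnecessary once generation is granted. The second and more substantive difference is in $H^4$: the paper does \emph{not} perform the intersection-theoretic verification $\deg(c_2(\FF_E)|_{E'})=\delta_{EE'}$ that you propose; it asserts that the five virtual bundles give a basis and refers, for a theoretical proof, to Craw's linearization-map argument in the forthcoming \cite{C2}. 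So the step you correctly single out as the main obstacle --- Chern class computations on the non-toric compact divisors, including the rank-two subtlety for $V_5$ --- is precisely the step the paper outsources rather than computes. Your plan is sound in outline, but as written that verification remains an unexecuted (and genuinely hard) computation, whereas the paper closes it by citation.
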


\begin{proof} $(a)$ The statement comes directly from Table \ref{exa:Dn:basisRi}. It can be computed that the five relations hold in every open set $U_i\subset\Hilb{G}{\C^3}$. 

$(b)$ We know that $\RR_\rho$, for non-trivial $\rho\in\Irr G$, generates $\Pic Y$ but the relations in $(a)$ allows us to reduce the list of generators to 9 by removing from the list $\RR^-_0$, $\RR^+_8$, $\RR^-_6$, $\RR^+_6$ and $\RR_5$. There are 5 relations in $(a)$, precisely the number of compact divisors in $Y$. By \cite{IR} we know that $\rk\Pic Y=|\Irr G\backslash\{\rho_0^+\}|-5= 9$ so the classes of the first Chern class of the 9 remaining tautological bundles form a basis of $H^2(Y,\Z)$. 

The tautological bundle $\RR_0^+\cong\OO_Y$ generates $H^0(Y,\ZZ)$ and, as in the original Reid's recipe, the following virtual bundles
\begin{align*}
\FF^-_0 &:= (\RR^+_4 \otimes \RR^-_8)\ominus(\RR^-_0\otimes\OO_Y)	& \FF^-_6 &:= (\RR^+_4 \otimes \RR^-_2)\ominus(\RR^-_6\otimes\OO_Y)  \\
\FF^+_8 &:= (\RR^+_4 \otimes \RR^+_4)\ominus(\RR^+_8\otimes\OO_Y)	& \FF^+_6 &:= (\RR^+_4 \otimes \RR^+_2)\ominus(\RR^+_6\otimes\OO_Y) \\
\FF_5 &:= (\RR^+_4\otimes\det\RR_9)\ominus(\det\RR_5\otimes\OO_Y)
\end{align*}
give a basis of $H^4(Y,\ZZ)$. For a theoretical proof of the last statement see Craw's argument using the linearization map for $\Pic(Y)$ in the forthcoming \cite{C2}.
\end{proof}

\begin{rem} We conclude the example with some observations that arised from the computations.
\begin{itemize}
\item[(i)] Since the group $\overline{G}\subset\GL(2,\C)$ is a subgroup of $G$, then $\Hilb{\overline{G}}{\C^2}\subset\Hilb{G}{\C^3}$. Indeed, $\Hilb{\overline{G}}{\C^2}$ is recovered as the divisor $z=0$ in the union of open sets $U_1\cup U_5\cup U_6\cup U_{10}\cup U_{11}$. 

\item[(ii)] Every irreducible representation of $G$ appear as either marking a rational curve or marking a divisor $E_i\subset Y$, as it happens in the abelian case (see \cite[4.6]{C}).

\item[(iii)] It can be seeing that the action of $G/A$ on $\Hilb{A}{\C^3}$ also identifies the marking of the abelian subgroup $A$ (see Figure \ref{exa:Ab:marking}). In fact, the relations $\RR^-_6 = \RR^+_4 \otimes \RR^-_2$ and $\RR^+_6 = \RR^+_4 \otimes \RR^+_2$ may be considered as being induced by the relation $\RR_6 = \RR_2\otimes\RR_4$ in $\Pic(\Hilb{A}{\C^3})$, while the rest doesn't. This reflects the fact that in this example (and in general) $\Hilb{G}{\C^3}\ncong\Hilb{G/A}{\Hilb{A}{\C^3}}$, so the marking will also be different.
\end{itemize}
\end{rem}

\section{The trihedral group $G=\Span{\frac{1}{13}(1,3,9),T}\subset\SL(3,\C)$.} \label{exa:T}

A trihedral group $G\subset\SL(3,\C)$ is the semidirect product $G=A\rtimes T$, where $A$ is a finite abelian group and $T$ is the subgroup generated by $T=\left(\begin{smallmatrix}0&1&0\\0&0&1\\1&0&0\end{smallmatrix}\right)$. The case $A=\Z_2\times\Z_2$ is the smallest example of such a group, and the exception fibre $\pi^{-1}(0)\subset\Hilb{G}{\C^3}$ in this case consists of 3 rational curves in a Dynkin $A_3$ configuration, each of them corresponding to the three non-trivial irreducible representations of $G$ (see \cite{NS} for the computations). It is the only trihedral case where this fibre is $1$-dimensional. 

Let us consider from now on the following trihedral group in $\SL(3,\C)$:
\[
G=\Span{\alpha=\frac{1}{13}(1,3,9),T}\subset\SL(3,\C)
\]
The group $G$ is of the form $G=A\rtimes\Z/3\Z$ where $A=\frac{1}{13}(1,3,9)$. It has three 1-dimensional representations $\rho_0$ (trivial), $\rho_0'$ and $\rho_0''$, and four 3-dimensional representations $V_1$, $V_2$, $V_4$ and $V_7$. As in the previous examples, the notation for the representations comes from the action of $T$ on $\Irr A=\{\rho_0,\ldots,\rho_{12}\}$ as follows. The orbits of the action are given by the fixed representation $\rho_0$, which splits into the characters $1$, $\omega$ and $\omega^2$ of the $\Z/3\Z$-action producing $\rho_0$, $\rho_o'$ and $\rho_0''$; and the permutations $(\rho_1,\rho_3,\rho_9)$, $(\rho_2,\rho_6,\rho_5)$, $(\rho_4,\rho_{12},\rho_{10})$ and $(\rho_7,\rho_8,\rho_{11})$ producing $V_1$, $V_2$, $V_4$ and $V_7$ respectively. Let us denote by $\RR_0$, $\RR_0'$, $\RR_0''$, $\RR_1$, $\RR_2$, $\RR_4$ and $\RR_7$ the corresponding tautological bundles.  

The McKay quiver $Q$ with relations is shown in Figure \ref{exa:T13:quiver}, and the maps between the coinvariant algebras $S_\rho$ are:
\[
{\renewcommand{\arraystretch}{1.75}
\begin{array}{c}
a = (x,y,z), b = (x,\omega y, \omega^2z), c = (x,\omega^2y, \omega z), \\
A = \left(\begin{smallmatrix}x\\y\\z\end{smallmatrix}\right), B = \left(\begin{smallmatrix}x\\\omega y\\ \omega^2z\end{smallmatrix}\right), C = \left(\begin{smallmatrix}x\\\omega^2y\\\omega z\end{smallmatrix}\right)\\
d = f = \left(\begin{smallmatrix}0&0&z\\x&0&0\\0&y&0\end{smallmatrix}\right), e = \left(\begin{smallmatrix}y&0&0\\0&z&0\\0&0&x\end{smallmatrix}\right), g = \left(\begin{smallmatrix}x&0&0\\0&y&0\\0&0&z\end{smallmatrix}\right), G = v = \left(\begin{smallmatrix}0&x&0\\0&0&y\\z&0&0\end{smallmatrix}\right), \\
h=\left(\begin{smallmatrix}y&0&0\\0&z&0\\0&0&x\end{smallmatrix}\right), H=u=\left(\begin{smallmatrix}0&0&y\\z&0&0\\0&x&0\end{smallmatrix}\right), i=\left(\begin{smallmatrix}0&z&0\\0&0&x\\y&0&0\end{smallmatrix}\right), j=\left(\begin{smallmatrix}0&0&x\\y&0&0\\0&z&0\end{smallmatrix}\right)
\end{array}
}
\]

\begin{figure}[htbp]
\begin{center}
\begin{tikzpicture}
\node (n1) at (0,0) 
{
\begin{tikzpicture} [scale=0.75]
\node (0) at (0,1.5)  {$0$};
\node (0p) at (1.5,1.5)  {$0'$};
\node (0pp) at (3,1.5)  {$0''$};
\node (1) at (4.5,3)  {$1$};
\node (2) at (7.5,3) {$2$};
\node (4) at (4.5,0)  {$4$};
\node (7) at (7.5,0) {$7$};
\draw [->] (0) to node[gap]  {$\scriptstyle a$} (1);
\draw [->] (0p) to node[gap]  {$\scriptstyle b$} (1);
\draw [->] (0pp) to node[gap]  {$\scriptstyle c$} (1);
\draw[->] (1)+(30:6pt) -- node[above]{$\scriptstyle g$}  ($(2)+(150:6pt)$);
\draw[<-] (1)+(-30:6pt) -- node[below]{$\scriptstyle G$}  ($(2)+(210:6pt)$);
\draw[->] (1)+(-120:6pt) -- node[left]{$\scriptstyle d$}  ($(4)+(120:6pt)$);
\draw[->] (1)+(-60:6pt) -- node[right]{$\scriptstyle e$}  ($(4)+(60:6pt)$);
\draw[->] (4)+(30:6pt) -- node[above]{$\scriptstyle h$}  ($(7)+(150:6pt)$);
\draw[<-] (4)+(-30:6pt) -- node[below]{$\scriptstyle H$}  ($(7)+(210:6pt)$);
\draw [->] (4) to node[gap]  {$\scriptstyle A$} (0);
\draw [->] (4) to node[gap]  {$\scriptstyle B$} (0p);
\draw [->] (4) to node[gap]  {$\scriptstyle C$} (0pp);
\draw [->] (2) to node[right]  {$\scriptstyle f$} (7);
\draw [->] (4) to node[near end, below]  {$\scriptstyle j$} (2);
\draw [->] (7) to node[near end, above]  {$\scriptstyle i$} (1);
\end{tikzpicture}
};
\node (n2) at (6,0.5) {\small$
\begin{array}{ccc}
ad=ae & bd = \omega be & cd = \omega^2ce\\
dA=eA & dB=\omega eB & dC=\omega^2 eC \\
ig=Hj & uG=fi & gu=ej \\
id=vH & jf=hv & gf=dh \\
Ge=fH & Gg=u^2 & Hh=v^2  \\
\end{array}
$
};
\node (n2) at (6,-0.75) {\small$
\begin{array}{c}
Aa+\omega Bb+\omega^2Cc=3hi \\ 
Aa+\omega^2Bb+\omega Cc=3jG \\
\end{array}
$
};
\end{tikzpicture}
\vspace{-0.75cm}
\caption{McKay quiver and relations for $G=\Span{\frac{1}{13}(1,3,9),T}$.}
\label{exa:T13:quiver}
\end{center}
\end{figure}
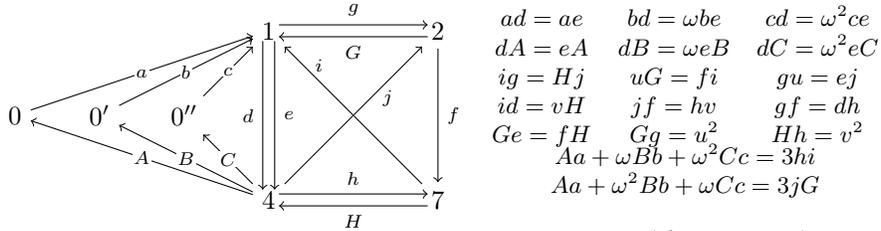

Consider the following polynomials that will be used in what follows:
\[
\begin{array}{ll}
f := x^{13}+y^{13}+z^{13} & g := xy^4+yz^4+x^4z\\
f' := x^{13}+\omega^2y^{13}+\omega z^{13} & g' := xy^4+\omega^2yz^4+\omega x^4z\\
f'' := x^{13}+\omega^2y^{13}+\omega z^{13} & g'' := xy^4+\omega yz^4+\omega^2x^4z
\end{array}
\]
Note that $f,g\in S_{\rho_0}=\C[x,y,z]^G$ are invariant; $f',g'\in S_{\rho_0'}$ and $f'',g''\in S_{\rho_0''}$.

\begin{prop} Let $G=\Span{\frac{1}{13}(1,3,9),T}$, $\pi:Y=\Hilb{G}{\C^3}\to\C^3$ be the crepant resolution and let $\RR_i$ be the tautological bundles of $Y$. 
\begin{itemize}
\item[(a)] There exists an affine open cover of $Y$ consisting of five open sets, i.e.\  $\Hilb{G}{\C^3}\cong\bigcup_{i=1}^5U_i$. The fibre $\pi^{-1}(0)$ consists of two compact divisors $E_1$ and $E_2$ with the configuration shown in Figure \ref{exa:T13:McKay} (left).
\item[(b)] The five open sets in $(a)$ are in one to one correspondence with the five boats of $G$.
\item[(c)] Every nontrivial representation $\rho\in\Irr G$ marks either a curve or a compact divisor in $Y$ (see Figure \ref{exa:T13:McKay} right).
\item[(d)] The following two relations hold in $\Pic Y$:
		\[
		\RR_{0}'\otimes\RR_{0}''= \det\RR_4  \hspace{1cm} \det\RR_1=\det\RR_2
		\]
\item[(e)] The one-to-one correspondence (\ref{intMcKay}) holds, that is, there exists a bijection between irreducible representations of $G$ and a basis of $H^*(Y,\Z)$.
\end{itemize}
\end{prop}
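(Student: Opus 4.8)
The plan is to run the same computational machine used for the dihedral group in Section~\ref{exa:Dn}, now with the larger data of Figure~\ref{exa:T13:quiver}: four of the seven vertices carry $\C^3$, so the dimension vector is $d=(1,1,1,3,3,3,3)$. For part~(a) I realise $Y=\Hilb{G}{\C^3}$ as the moduli space $\Mtheta$ of $\theta^0$-stable representations of $(Q,R)$, where $\theta^0$ is the $0$-generated stability. Concretely, for each nontrivial vertex $\rho$ I impose the existence of $\dim\rho$ linearly independent paths from the trivial vertex $0$; solving the relations $R$ under each maximal consistent choice of such open conditions produces the affine charts. I expect exactly five charts $U_1,\dots,U_5$, each either $\C^3$ or a smooth hypersurface in $\C^4$, and reading off their pairwise overlaps along $\pi^{-1}(0)$ yields the curve-and-divisor configuration of Figure~\ref{exa:T13:McKay} with its two compact divisors $E_1,E_2$.

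Part~(b) is the combinatorial identification of the five charts with the five \emph{boats}: each boat is a $G$-graph, i.e.\ a $\C$-basis of $\OO_\ZZ\cong\C[G]$ assembled from the monomials attached to the chosen independent paths, and the $0$-generated condition forces each chart to determine and be determined by exactly one such $G$-graph. For part~(c) I apply the marking procedure of Section~\ref{sec:marking} chart by chart: the socle of $\ZZ_i$ at the origin of each $U_i$ (the vertices all of whose outgoing arrows vanish when the local parameters are set to zero) gives the divisor markings, and the submodules $G\text{-igMod}_C(U_i)$ along each interior curve give the curve markings. The delicate point, flagged in the Remark, is that here the curve $C_1$ is marked by two representations and $E_1$ by more than one; so I must check that, after accounting for these multiplicities, each of the six nontrivial irreducible representations occurs exactly once as a curve- or divisor-marking.

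For part~(d) the two relations $\RR_0'\otimes\RR_0''=\det\RR_4$ and $\det\RR_1=\det\RR_2$ are verified by comparing the explicit local bases of the relevant tautological bundles on each of the five charts, exactly as the five dihedral relations were read off from Table~\ref{exa:Dn:basisRi}. Part~(e) then assembles the correspondence (\ref{intMcKay}) as in the dihedral proposition. The trivial bundle $\RR_0\cong\OO_Y$ generates $H^0(Y,\Z)$. The nontrivial $\RR_\rho$ generate $\Pic Y$, and the two relations of~(d) let me discard two of them, say the classes of $\RR_4$ and $\RR_2$; since \cite{IR} gives $\rk\Pic Y=|\Irr G\setminus\{\rho_0\}|-2=6-2=4$, the number $2$ being the count of compact divisors (age-$2$ classes), the first Chern classes of the four surviving bundles form a basis of $H^2(Y,\Z)$. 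For $H^4(Y,\Z)$ I form one virtual bundle per relation,
\begin{align*}
\FF_4 &:= (\RR_0'\otimes\RR_0'')\ominus(\det\RR_4\otimes\OO_Y), & \FF_2 &:= (\det\RR_1)\ominus(\det\RR_2\otimes\OO_Y),
\end{align*}
and claim that $c_2(\FF_4),c_2(\FF_2)$ are dual to $E_1,E_2$, hence a basis of $H^4(Y,\Z)$; for the conceptual justification I invoke Craw's linearisation-map argument in \cite{C2}, exactly as in the dihedral case.

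The main obstacle is part~(c). Unlike the toric abelian situation, the modules $G\text{-igMod}_C(U_i)$ and $G\text{-igMod}_C(U_j)$ on the two charts covering a curve need not be generated by the same representation, so proving that the markings nonetheless exhaust $\Irr G\setminus\{\rho_0\}$ bijectively---and in particular bookkeeping the doubly-marked $C_1$ and $E_1$ against the counts $\dim H^2=4$, $\dim H^4=2$ used in~(e)---is the nonroutine step. By contrast, the open-cover computation of~(a), though long because of the $3$-dimensional vertices, is mechanical once the independent-path conditions are fixed, and~(d) is a direct verification on the five charts.
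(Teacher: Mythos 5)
Your overall strategy is the paper's own: realise $Y$ as $\Mtheta$ with the $0$-generated stability, cut out charts by choices of linearly independent paths, mark divisors by socles and curves by $G$-igsaw modules, verify the two Picard relations from the explicit chart data, and combine the Ito--Reid rank count with two virtual bundles (justified via \cite{C2}) to produce the bases of $H^2(Y,\Z)$ and $H^4(Y,\Z)$. However, one step of your plan would fail as stated. In part (c) you propose to check that ``each of the six nontrivial irreducible representations occurs exactly once as a curve- or divisor-marking'', i.e.\ that the marking exhausts $\Irr G\setminus\{\rho_0\}$ \emph{bijectively}. That is false in this example: $V_2$ marks the four curves $C_1$, $C_3$, $C_5$ and $C_6$ (with $C_1$ additionally marked by $V_4$, and $E_1$ carrying both $\rho_0'$ and $\rho_0''$). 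What part (c) asserts, and what the computation shows, is only that every nontrivial representation marks \emph{at least one} component; injectivity fails and is not needed. Indeed the bijection in (e) is not read off from the marking at all: it comes from $\rk\Pic Y=4$, the four surviving bundles $\RR_0'$, $\RR_0''$, $\det\RR_2$, $\det\RR_7$, and the two virtual bundles $\FF_{E_1}$, $\FF_{E_2}$ attached to the two relations, one per compact divisor. So the ``delicate bookkeeping'' you identify as the crux is aimed at a statement that is both wrong and unnecessary.

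Two further points where your predictions diverge from the actual computation. First, in (a) you expect every chart to be $\C^3$ or a smooth hypersurface in $\C^4$, by analogy with the dihedral case; in fact only $U_5\cong\C^3$ and $U_3$, $U_4$ are hypersurfaces in $\C^4$, while $U_1$ and $U_2$ require many more variables and equations, and the paper only restricts the computation there to the exceptional fibre. Consequently your plan for (d), comparing local bases chart by chart as in Table \ref{exa:Dn:basisRi}, would be unwieldy on $U_1$ and $U_2$; the paper instead computes the degrees of $\RR_0'$, $\RR_0''$ and the $\det\RR_i$ on all six exceptional curves (Figure \ref{exa:T13:degrees}) and uses that a class in $\Pic Y$ is determined by its degrees on the compact curves. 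Second, a boat is not a $G$-graph: it is a fundamental domain of $|A|=13$ monomials for the $T$-action on a $G$-cluster, the $G$-graph being recovered as $\mathcal{B}\cup T\mathcal{B}\cup T^2\mathcal{B}$ together with the twinning of the hexagon at the origin; your chart--boat correspondence in (b) survives, but only with this corrected definition.
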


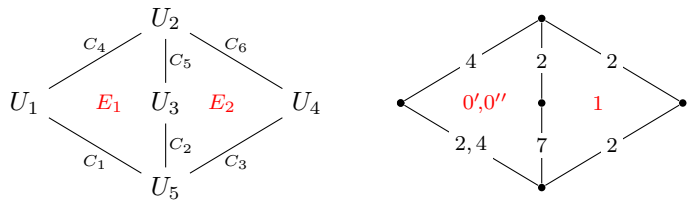
\begin{figure}[htbp]
\begin{center}

\begin{tikzpicture}

\node at (0,0) {
\begin{tikzpicture}[scale=0.75]
\node (u1) at (0,1.5) {$U_1$};
\node (u2) at (2.5,3) {$U_2$};
\node (u3) at (2.5,1.5) {$U_3$};
\node (u4) at (5,1.5) {$U_4$};
\node (u5) at (2.5,0) {$U_5$};
\draw[-] (u1) to node[above] {\tiny$C_4$} (u2);
\draw[-] (u2) to node[right,inner sep=1pt] {\tiny$C_5$} (u3);
\draw[-] (u3) to node[right,inner sep=1pt] {\tiny$C_2$} (u5);
\draw[-] (u1) to node[below] {\tiny$C_1$} (u5);
\draw[-] (u2) to node[above] {\tiny$C_6$} (u4);
\draw[-] (u5) to node[below] {\tiny$C_3$} (u4);
%\draw (u3) -- (u5) -- (u4) -- (u2) -- (u1) -- (u5) -- (u3) -- (u2);
\node at (1.5,1.5) [red] {\footnotesize $E_1$};
\node at (3.5,1.5) [red] {\footnotesize $E_2$};
\end{tikzpicture}
};

\node at (5,0) {
\begin{tikzpicture}[scale=0.75]
\node (u1) [circle,fill=black,inner sep=1pt] at (0,1.5) {};
\node (u1) [circle,fill=black,inner sep=1pt] at (0,1.5) {};
\node (u2) [circle,fill=black,inner sep=1pt] at (2.5,3) {};
\node (u3) [circle,fill=black,inner sep=1pt] at (2.5,1.5) {};
\node (u4) [circle,fill=black,inner sep=1pt] at (5,1.5) {};
\node (u5) [circle,fill=black,inner sep=1pt] at (2.5,0) {};
\draw[-] (u1) to node[gap,inner sep=2pt] {\footnotesize$4$} (u2);
\draw[-] (u2) to node[gap,inner sep=2pt] {\footnotesize$2$} (u3);
\draw[-] (u3) to node[gap,inner sep=2pt] {\footnotesize$7$} (u5);
\draw[-] (u1) to node[gap,inner sep=2pt] {\footnotesize$2,4$} (u5);
\draw[-] (u2) to node[gap,inner sep=2pt] {\footnotesize$2$} (u4);
\draw[-] (u5) to node[gap,inner sep=2pt] {\footnotesize$2$} (u4);
%\draw (u3) -- (u5) -- (u4) -- (u2) -- (u1) -- (u5);
\node at (1.5,1.5) [red] {\footnotesize $0'\!,\!0''$};
\node at (3.5,1.5) [red] {\footnotesize $1$};
\end{tikzpicture}
};

\end{tikzpicture}
\vspace{-0.5cm}
\caption{Description of the exceptional components of $\pi^{-1}(0)\subset\Hilb{G}{\C^3}$ (left) and its marking (right).}
\label{exa:T13:McKay}
\end{center}
\end{figure}

\begin{proof} (a) Let us denote by $d_i$ for $i=1,2,3$ the $i$-th row of the matrix $d$, and use the analogous notation for the rest of the matrices in the representation space of the McKay quiver. Then, the linearly independent paths defining the five open sets $U_i\subset\Hilb{G}{\C^3}$ are shown in Table \ref{exa:T:OpenConds}.

{\small
\begin{table}[h]
\begin{center}
\begin{tabular}{cl}
Open & Linearly independent paths in $Q$ (open conditions) \\
\hline
$U_1$ 	&  $a=(1,0,0)$, $B_2=C_2=1$, $e_3=g_1=h_1=(1,0,0)$, \\
		& $f_2=G_1=H_1=u_3=(0,1,0)$, $d_2=i_3=j_3=v_2=(0,0,1)$ \\
\hline
$U_2$ 	&   $a=(1,0,0)$, $B_3=C_3=1$, $d_1=g_1=h_1=(1,0,0)$, \\
		& $e_2=f_2=G_1=u_3=(0,1,0)$, $d_2=i_3=j_3=v_2=(0,0,1)$ \\
\hline
$U_3$ 	& $a=(1,0,0)$, $B_3=C_3=1$, $d_1=g_1=h_1=(1,0,0)$, \\
		& $e_2=f_2=G_1=u_1=(0,1,0)$, $d_2=g_2=G_3=H_3=(0,0,1)$ \\
\hline
$U_4$ 	& $a=(1,0,0)$, $B_3=C_3=1$, $d_1=g_1=h_1=(1,0,0)$, \\
		& $e_2=f_2=G_1=u_2=(0,1,0)$, $b=d_2=j_3=v_2=(0,0,1)$ \\
\hline
$U_5$ 	& $a=(1,0,0)$, $B_3=C_3=1$, $d_1=g_1=h_1=(1,0,0)$, \\
		& $e_2=f_2=G_1=u_2=(0,1,0)$, $d_2=g_2=G_3=v_2=(0,0,1)$ \\
\hline
\end{tabular}
\end{center}
\caption{Open conditions for each open set $U_i\subset\Hilb{G}{\C^3}$, for $i=1,\ldots,5$.}
\label{exa:T:OpenConds}
\end{table}%
}

As a result of the choices made in Table \ref{exa:T:OpenConds} we obtain $U_5\cong\C^3$; $U_4$ and $U_3$ defined as hypersurfaces in $\C^4$; and the open sets $U_1$ and $U_2$ defined by a large number of equations and variables. Even though the explicit description of the last two open sets is not ideal, it is possible to restrict the calculations to the exceptional divisor obtaining the two compact divisors $E_1$ and $E_2$, and five rational curves $C_1,\ldots,C_6$ as shown in Figure \ref{exa:T13:McKay} (left).

(b) In order to construct explicitly $\Hilb{G}{\C^3}$ for trihedral groups M. Reid has introduced the notion of {\em trihedral boats} in order to describe a monomial basis for every $G$-cluster, following Nakamura's spirit of $A$-graphs in the abelian case \footnote{See \url{https://homepages.warwick.ac.uk/~masda/McKay/tri/} for information, references and scripts about boats}. A trihedral boat is a subset of $|A|$ monomials, one in each irreducible representation of $A$, which is a fundamental domain for the action of $T$ on a $G$-cluster. In our case, the boats $\mathcal{B}_i$ corresponding to each open set $U_i$, $i=1,\ldots,5$, are shown in Figure \ref{exa:T13:boats}.

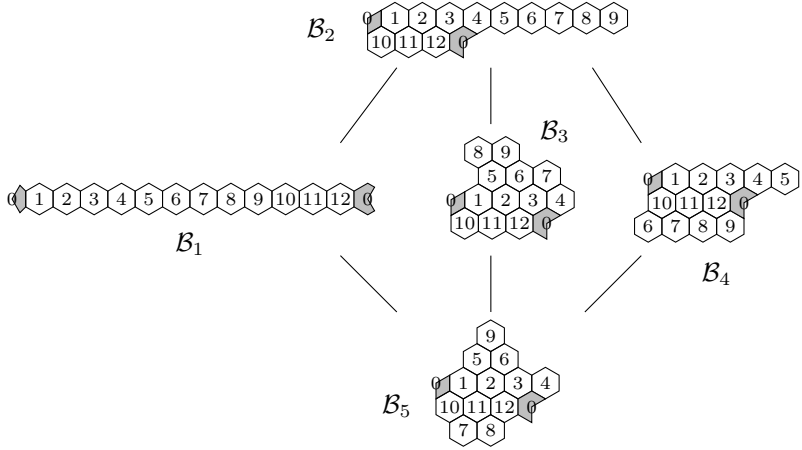
\begin{figure}[htbp]
\begin{center}
\begin{tikzpicture}

\node at (0,1.65) {$\mathcal{B}_1$};
\node at (2.75,-0.5) {$\mathcal{B}_5$};
\node at (7,1.25) {$\mathcal{B}_4$};
\node at (4.85,3.15) {$\mathcal{B}_3$};
\node at (1.75,4.5) {$\mathcal{B}_2$};

\draw (4,0.75) -- (4,1.5);
\draw (2.75,0.75) -- (2,1.5);
\draw (5.25,0.75) -- (6,1.5);
\draw (4,3.25) -- (4,4);
\draw (2,3) -- (2.75,4);
\draw (6,3) -- (5.35,4);

%%%% Boat 1
\node at (0,2.25) {
\begin{tikzpicture}[scale=0.3,bend angle=45, looseness=1]
	% The radio of the hexagons is 0.7
	\draw [fill=lightgray] (0,0) -- (60:0.7) -- (30:0.7) -- (-30:0.7) -- (-60:0.7) -- cycle; \node at (0,0) {\scriptsize$0$};
	\hex{((0,0)}{(\r,0)} \node at (\r,0) {\scriptsize$1$};
	\hex{((0,0)}{(2*\r,0)} \node at (2*\r,0) {\scriptsize$2$};
	\hex{((0,0)}{(3*\r,0)} \node at (3*\r,0) {\scriptsize$3$};
	\hex{((0,0)}{(4*\r,0)} \node at (4*\r,0) {\scriptsize$4$};
	\hex{((0,0)}{(5*\r,0)} \node at (5*\r,0) {\scriptsize$5$};
	\hex{((0,0)}{(6*\r,0)} \node at (6*\r,0) {\scriptsize$6$};
	\hex{((0,0)}{(7*\r,0)} \node at (7*\r,0) {\scriptsize$7$};
	\hex{((0,0)}{(8*\r,0)} \node at (8*\r,0) {\scriptsize$8$};
	\hex{((0,0)}{(9*\r,0)} \node at (9*\r,0) {\scriptsize$9$};
	\hex{((0,0)}{(10*\r,0)} \node at (10*\r,0) {\scriptsize$10$};
	\hex{((0,0)}{(11*\r,0)} \node at (11*\r,0) {\scriptsize$11$};
	\hex{((0,0)}{(12*\r,0)} \node at (12*\r,0) {\scriptsize$12$};
	\draw [fill=lightgray] (12.5*\r,0.7/2) -- (13*\r,0.7) -- (13.25*\r,3/4*0.7) -- (13*\r,0) -- (13.25*\r,-3/4*0.7) -- (13*\r,-0.7) -- (12.5*\r,-0.7/2) -- cycle;
	\node at (13*\r,0) {\scriptsize$0$};
\end{tikzpicture}
};

%%%% Boat 2
\node at (4,4.5) {
\begin{tikzpicture}[scale=0.3,bend angle=45, looseness=1]
	% The radio of the hexagons is 0.7
	\draw [fill=lightgray] (0,0) -- (30:0.7) -- (-30:0.7) -- (-90:0.7) -- cycle; \node at (0,0) {\scriptsize$0$};
	\hex{((0,0)}{(\r,0)} \node at (\r,0) {\scriptsize$1$};
	\hex{((0,0)}{(2*\r,0)} \node at (2*\r,0) {\scriptsize$2$};
	\hex{((0,0)}{(3*\r,0)} \node at (3*\r,0) {\scriptsize$3$};
	\hex{((0,0)}{(4*\r,0)} \node at (4*\r,0) {\scriptsize$4$};
	\hex{((0,0)}{(5*\r,0)} \node at (5*\r,0) {\scriptsize$5$};
	\hex{((0,0)}{(6*\r,0)} \node at (6*\r,0) {\scriptsize$6$};
	\hex{((0,0)}{(7*\r,0)} \node at (7*\r,0) {\scriptsize$7$};
	\hex{((0,0)}{(8*\r,0)} \node at (8*\r,0) {\scriptsize$8$};
	\hex{((0,0)}{(9*\r,0)} \node at (9*\r,0) {\scriptsize$9$};
	\hex{((0,0)}{(0.5*\r,-1.5*0.7)} \node at (0.5*\r,-1.5*0.7) {\scriptsize$10$};
	\hex{((0,0)}{(1.5*\r,-1.5*0.7)} \node at (1.5*\r,-1.5*0.7) {\scriptsize$11$};
	\hex{((0,0)}{(2.5*\r,-1.5*0.7)} \node at (2.5*\r,-1.5*0.7) {\scriptsize$12$};
	\draw [fill=lightgray] (4*\r,-0.7) -- (3.5*\r,-1.5*0.7) -- (3.5*\r,-2.5*0.7) -- (3*\r,-2*0.7) -- (3*\r,-0.7) -- (3.5*\r,-0.5*0.7) --cycle;
	\node at (3.5*\r,-1.5*0.7) {\scriptsize$0$};
\end{tikzpicture}
};

%%%% Boat 3
\node at (4.2,2.4) {
\begin{tikzpicture}[scale=0.3,bend angle=45, looseness=1]
	% The radio of the hexagons is 0.7
	\draw [fill=lightgray] (0,0) -- (30:0.7) -- (-30:0.7) -- (-90:0.7) -- cycle; \node at (0,0) {\scriptsize$0$};
	\hex{((0,0)}{(\r,0)} \node at (\r,0) {\scriptsize$1$};
	\hex{((0,0)}{(2*\r,0)} \node at (2*\r,0) {\scriptsize$2$};
	\hex{((0,0)}{(3*\r,0)} \node at (3*\r,0) {\scriptsize$3$};
	\hex{((0,0)}{(4*\r,0)} \node at (4*\r,0) {\scriptsize$4$};
	
	\hex{((0,0)}{(1.5*\r,1.5*0.7)} \node at (1.5*\r,1.5*0.7) {\scriptsize$5$};
	\hex{((0,0)}{(2.5*\r,1.5*0.7)} \node at (2.5*\r,1.5*0.7) {\scriptsize$6$};
	\hex{((0,0)}{(3.5*\r,1.5*0.7)} \node at (3.5*\r,1.5*0.7) {\scriptsize$7$};
	
	\hex{((0,0)}{(\r,3*0.7)} \node at (\r,3*0.7) {\scriptsize$8$};
	\hex{((0,0)}{(2*\r,3*0.7)} \node at (2*\r,3*0.7) {\scriptsize$9$};
	
	\hex{((0,0)}{(0.5*\r,-1.5*0.7)} \node at (0.5*\r,-1.5*0.7) {\scriptsize$10$};
	\hex{((0,0)}{(1.5*\r,-1.5*0.7)} \node at (1.5*\r,-1.5*0.7) {\scriptsize$11$};
	\hex{((0,0)}{(2.5*\r,-1.5*0.7)} \node at (2.5*\r,-1.5*0.7) {\scriptsize$12$};
	\draw [fill=lightgray] (4*\r,-0.7) -- (3.5*\r,-1.5*0.7) -- (3.5*\r,-2.5*0.7) -- (3*\r,-2*0.7) -- (3*\r,-0.7) -- (3.5*\r,-0.5*0.7) --cycle;
	\node at (3.5*\r,-1.5*0.7) {\scriptsize$0$};
\end{tikzpicture}
};

%%%% Boat 4
\node at (7,2.2) {
\begin{tikzpicture}[scale=0.3,bend angle=45, looseness=1]
	% The radio of the hexagons is 0.7
	\draw [fill=lightgray] (0,0) -- (30:0.7) -- (-30:0.7) -- (-90:0.7) -- cycle; \node at (0,0) {\scriptsize$0$};
	\hex{((0,0)}{(\r,0)} \node at (\r,0) {\scriptsize$1$};
	\hex{((0,0)}{(2*\r,0)} \node at (2*\r,0) {\scriptsize$2$};
	\hex{((0,0)}{(3*\r,0)} \node at (3*\r,0) {\scriptsize$3$};
	\hex{((0,0)}{(4*\r,0)} \node at (4*\r,0) {\scriptsize$4$};
	\hex{((0,0)}{(5*\r,0)} \node at (5*\r,0) {\scriptsize$5$};
		
	\hex{((0,0)}{(0*\r,-3*0.7)} \node at (0*\r,-3*0.7) {\scriptsize$6$};
	\hex{((0,0)}{(1*\r,-3*0.7)} \node at (1*\r,-3*0.7) {\scriptsize$7$};
	\hex{((0,0)}{(2*\r,-3*0.7)} \node at (2*\r,-3*0.7) {\scriptsize$8$};
	\hex{((0,0)}{(3*\r,-3*0.7)} \node at (3*\r,-3*0.7) {\scriptsize$9$};
	
	\hex{((0,0)}{(0.5*\r,-1.5*0.7)} \node at (0.5*\r,-1.5*0.7) {\scriptsize$10$};
	\hex{((0,0)}{(1.5*\r,-1.5*0.7)} \node at (1.5*\r,-1.5*0.7) {\scriptsize$11$};
	\hex{((0,0)}{(2.5*\r,-1.5*0.7)} \node at (2.5*\r,-1.5*0.7) {\scriptsize$12$};
	\draw [fill=lightgray] (4*\r,-0.7) -- (3.5*\r,-1.5*0.7) -- (3.5*\r,-2.5*0.7) -- (3*\r,-2*0.7) -- (3*\r,-0.7) -- (3.5*\r,-0.5*0.7) --cycle;
	\node at (3.5*\r,-1.5*0.7) {\scriptsize$0$};
%	\hex{((0,0)}{(3.5*\r,-1.5*0.7)} \node at (3.5*\r,-1.5*0.7) {\scriptsize$0$};
%	\hex{((0,0)}{(4.5*\r,-1.5*0.7)} \node at (4.5*\r,-1.5*0.7) {\scriptsize$1$};
\end{tikzpicture}
};

%%%% Boat 5
\node at (4,-0.2) {
\begin{tikzpicture}[scale=0.3,bend angle=45, looseness=1]
	% The radio of the hexagons is 0.7
	\draw [fill=lightgray] (0,0) -- (30:0.7) -- (-30:0.7) -- (-90:0.7) -- cycle; \node at (0,0) {\scriptsize$0$};
	\hex{((0,0)}{(\r,0)} \node at (\r,0) {\scriptsize$1$};
	\hex{((0,0)}{(2*\r,0)} \node at (2*\r,0) {\scriptsize$2$};
	\hex{((0,0)}{(3*\r,0)} \node at (3*\r,0) {\scriptsize$3$};
	\hex{((0,0)}{(4*\r,0)} \node at (4*\r,0) {\scriptsize$4$};

	\hex{((0,0)}{(1.5*\r,1.5*0.7)} \node at (1.5*\r,1.5*0.7) {\scriptsize$5$};	
	\hex{((0,0)}{(2.5*\r,1.5*0.7)} \node at (2.5*\r,1.5*0.7) {\scriptsize$6$};
	
	\hex{((0,0)}{(\r,-3*0.7)} \node at (\r,-3*0.7) {\scriptsize$7$};
	\hex{((0,0)}{(2*\r,-3*0.7)} \node at (2*\r,-3*0.7) {\scriptsize$8$};

	\hex{((0,0)}{(2*\r,3*0.7)} \node at (2*\r,3*0.7) {\scriptsize$9$};
	
	\hex{((0,0)}{(0.5*\r,-1.5*0.7)} \node at (0.5*\r,-1.5*0.7) {\scriptsize$10$};
	\hex{((0,0)}{(1.5*\r,-1.5*0.7)} \node at (1.5*\r,-1.5*0.7) {\scriptsize$11$};
	\hex{((0,0)}{(2.5*\r,-1.5*0.7)} \node at (2.5*\r,-1.5*0.7) {\scriptsize$12$};
	\draw [fill=lightgray] (4*\r,-0.7) -- (3.5*\r,-1.5*0.7) -- (3.5*\r,-2.5*0.7) -- (3*\r,-2*0.7) -- (3*\r,-0.7) -- (3.5*\r,-0.5*0.7) --cycle;
	\node at (3.5*\r,-1.5*0.7) {\scriptsize$0$};
%	\hex{((0,0)}{(3.5*\r,-1.5*0.7)} \node at (3.5*\r,-1.5*0.7) {\scriptsize$0$};
%	\hex{((0,0)}{(4.5*\r,-1.5*0.7)} \node at (4.5*\r,-1.5*0.7) {\scriptsize$1$};
\end{tikzpicture}
};

\end{tikzpicture}
\vspace{-0.5cm}
\caption{The five boats for the trihedral group $\Span{\frac{1}{13}(1,3,9),T}$.}
\label{exa:T13:boats}
\end{center}
\end{figure}

The plane of hexagonal cells represent every monomial in $\C[x,y,z]$ modulo $xyz$, and the numbers indicate the irreducible representation $\rho_i\in\Irr A$ in which they belong. For example, the numbers 0, 1, 2 , 3 and 4 contained in the consecutive hexagons that can be found in every boat, represent the monomials $1$, $x$, $x^2$, $x^3$ and $x^4$ which belong to $\rho_0$, $\rho_1$, $\rho_2$, $\rho_3$ and $\rho_4$ respectively. The point is that if we rotate a boat $\mathcal{B}$ twice an angle of $2\pi/3$ around the hexagon 0 with the monomial 1 (which is equivalent to the permutation defined by $T:x\mapsto y\mapsto z\mapsto x$), the monomials of the shape formed by the 3 pieces together $\mathcal{B}\cup T\mathcal{B} \cup T^2\mathcal{B}$ form a $G$-graph, thus a monomial basis of every $G$-cluster in an affine open subset of $\Hilb{G}{\C^3}$. Indeed, every boat contains one element in each representation $\rho_i\in\Irr A$, with $i\neq0$, and the orbits $(\rho_1,\rho_3,\rho_9)$, $(\rho_2,\rho_6,\rho_5)$, $(\rho_4,\rho_{12},\rho_{10})$ and $(\rho_7,\rho_8,\rho_{11})$ for the action of $T$ form 3 elements in each irreducible representation $V_1$, $V_2$, $V_4$, $V_7\in\Irr G$ (see the columns on the matrices in Table \ref{exa:T:basisRi}). For $\rho_0$, every boat has one hexagon although it is divided in two parts (highlighted in gray). Once we join the three pieces acting by $T$, we obtain 3 full hexagons corresponding to three polynomials $p=1$, $p'$ and $p''$ belonging to the 1-dimensional representations $\rho_0$, $\rho_0'$, $\rho_0''\in\Irr G$. For example, in $\mathcal{B}_5$ we have monomials $1$ and $x^4z$ in the boat. After joining the 3 boats we obtain the monomials
\[
1, x^4z, xy^4, yz^4 \in S_{\rho_0\in\Irr A}
\]
which give the polynomials
\begin{align*}
1 &\in S_{\rho_0}\\
x^4z +\omega xy^4+\omega^2 yz^4 &\in S_{\rho_0'}\\
x^4z +\omega^2 xy^4+\omega yz^4 &\in S_{\rho_0''}
\end{align*}
This phenomenon is called {\em twinning}.

The boats $\mathcal{B}_i$ are obtained from the elements of each $\RR_i$ listed in Table \ref{exa:T:basisRi}, which is the result of the open conditions in Table \ref{exa:T:OpenConds} for each $U_1$, $i=1,\ldots,5$. The columns of the matrices in Table \ref{exa:T:basisRi} for the bundles $\RR_1$ for $i=1,2,4,7$ are the basis elements for each of them (recall that their rank is three).

{\scriptsize
\begin{table}[htp]
\begin{center}
\setlength{\tabcolsep}{1pt}
\renewcommand{\arraystretch}{3}
\begin{tabular}{ccccccccc}
 & $\RR_0$ & $\RR_{0'}$ & $\RR_{0''}$ & $\RR_1$ & $\RR_2$ & $\RR_4$ & $\RR_7$ & Socle at $(0,0,0)$\\
\hline
$\ZZ_1$ &  $1$ & $f'$ & $f''$  
	& $\left[\begin{smallmatrix}x&z^3&y^9\\y&x^3&z^9\\x&y^3&x^9\end{smallmatrix}\right]$ 
	& $\left[\begin{smallmatrix}x^2&z^6&y^5\\y^2&x^6&z^5\\z^2&y^6&x^5\end{smallmatrix}\right]$
	& $\left[\begin{smallmatrix}y^{10}&z^{12}&x^4\\z^{10}&x^{12}&y^4\\x^{10}&y^{12}&z^4\end{smallmatrix}\right]$
	& $\left[\begin{smallmatrix}y^{11}&x^7&z^8\\z^{11}&y^7&x^8\\x^{11}&z^7&y^8\end{smallmatrix}\right]$ & $\rho_0', \rho_0''$ \\ 
$\ZZ_2$ &  $1$ & $g'$ & $g''$  	
	& $\left[\begin{smallmatrix}x&z^3&y^9\\y&x^3&z^9\\z&y^3&x^9\end{smallmatrix}\right]$ 
	& $\left[\begin{smallmatrix}x^2&z^6&y^5\\y^2&x^6&z^5\\z^2&y^6&x^5\end{smallmatrix}\right]$
	& $\left[\begin{smallmatrix}xy&yz^3&x^4\\yz&x^3z&y^4\\xz&xy^3&z^4\end{smallmatrix}\right]$
	& $\left[\begin{smallmatrix}xy^2&x^7&z^8\\yz^2&y^7&x^8\\x^2z&z^7&y^8\end{smallmatrix}\right]$ & $\rho_0', \rho_0'', V_1$ \\ 
$\ZZ_3$ &  $1$ & $g'$ & $g''$   	
	& $\left[\begin{smallmatrix}x&z^3&y^3z^2\\y&x^3&x^2z^3\\z&y^3&x^3y^2\end{smallmatrix}\right]$ 
	& $\left[\begin{smallmatrix}x^2&y^2z&xz^3\\y^2&xz^2&x^3y\\z^2&x^2y&y^3z\end{smallmatrix}\right]$
	& $\left[\begin{smallmatrix}xy&yz^3&x^4\\yz&x^3z&y^4\\xz&xy^3&z^4\end{smallmatrix}\right]$
	& $\left[\begin{smallmatrix}xy^2&x^2z^2&x^4y\\yz^2&x^2y^2&y^4z\\x^2z&y^2z^2&xz^4\end{smallmatrix}\right]$ & $\rho_0', \rho_0'', V_1, V_7$ \\ 
$\ZZ_4$ &  $1$ & $g'$ & $g''$   
	& $\left[\begin{smallmatrix}x&z^3&\\y&x^3&r\\z&y^3&\end{smallmatrix}\right]$ 
	& $\left[\begin{smallmatrix}x^2&y^2z&y^5\\y^2&xz^2&z^5\\z^2&x^2y&x^5\end{smallmatrix}\right]$
	& $\left[\begin{smallmatrix}xy&yz^3&x^4\\yz&x^3z&y^4\\xz&xy^3&z^4\end{smallmatrix}\right]$
	& $\left[\begin{smallmatrix}xy^2&x^2z^2&y^2z^3\\yz^2&x^2y^2&x^3z^2\\x^2z&y^2z^2&x^2y^3\end{smallmatrix}\right]$ & $V_1$  \\ 
$\ZZ_5$ &  $1$ & $g'$ & $g''$   	
	& $\left[\begin{smallmatrix}x&z^3&y^3z^2\\y&x^3&x^2z^3\\z&y^3&x^3y^2\end{smallmatrix}\right]$ 
	& $\left[\begin{smallmatrix}x^2&y^2z&xz^3\\y^2&xz^2&x^3y\\z^2&x^2y&y^3z\end{smallmatrix}\right]$
	& $\left[\begin{smallmatrix}xy&yz^3&x^4\\yz&x^3z&y^4\\xz&xy^3&z^4\end{smallmatrix}\right]$
	& $\left[\begin{smallmatrix}xy^2&x^2z^2&y^2z^3\\yz^2&x^2y^2&x^3z^2\\x^2z&y^2z^2&x^2y^3\end{smallmatrix}\right]$ & $\rho_0', \rho_0'', V_1, V_7$ \\ 
\hline
\end{tabular}
\end{center}
\caption{Basis of the fibres $\ZZ_i$ of $\RR$ on every open set in $U_i\subset\Hilb{G}{\C^3}$. The basis for $\RR_1$ in $U_4$ is $r:=(x^2y^4,y^2+z^4,x^4z^2)+\omega(x^5z,xy^5,yz^5)+\omega^2(xyz^4,x^4yz,xy^4z)$, corresponding to the path $agGdBb$ in $Q$.}
\label{exa:T:basisRi}
\end{table}%
}

(c) The computation of the socles at the origin of every open set $U_i\subset\Hilb{G}{\C^3}$ is shown in Table \ref{exa:T:basisRi}. From it we mark $E_1$ with representations $\rho_0',\rho_0''$, and we mark $E_2$ with the representation $V_1$.

The marking of the curves $C_i\subset\Hilb{G}{\C^3}$, $i=1,\ldots,6$, uses the $G$-igsaw transformation modules computed in Table \ref{exa:T:GigMod}. Notice that $C_1$ is marked by two different representations. 

{\small
\begin{table}[htp]
\begin{center}
\setlength{\tabcolsep}{1pt}
\renewcommand{\arraystretch}{1.5}
\begin{tabular}{clc}
  & $G$-igsaw transformation modules $G$-igMod$_{C_i}$. & Marking \\
\hline
$C_1~~~~~$ & $G$-igMod$_{C_1}(U_1)=\Span{y^5,z^5,x^5}\in V_2$, $G$-igMod$_{C_1}(U_5)=\Span{xy,yz,xz}\in V_4$  & $V_2, V_4$\\ 
$C_2~~~~~$ & $G$-igMod$_{C_2}(U_3)=\Span{y^2z^3,x^3z^2,x^2y^3}\in V_7$, $G$-igMod$_{C_2}(U_5)=\Span{x^4y,y^4z,xz^4}\in V_7$ & $V_7$ \\
$C_3~~~~~$ & $G$-igMod$_{C_3}(U_4)=\Span{xz^3,x^3y,y^3z}\in V_2$, $G$-igMod$_{C_3}(U_5)=\Span{y^5,z^5,x^5}\in V_2$ & $V_2$ \\ 
$C_4~~~~~$ & $G$-igMod$_{C_4}(U_1)=\Span{xy,yz,xz}\in V_4$, $G$-igMod$_{C_4}(U_2)=\Span{y^{10},z^{10},x^{10}}\in V_4$ & $V_4$ \\ 
$C_5~~~~~$ & $G$-igMod$_{C_5}(U_2)=\Span{y^5,z^5,x^5}\in V_2$, $G$-igMod$_{C_5}(U_3)=\Span{y^2z,xz^2,x^2y}\in V_2$ & $V_2$ \\ 
$C_6~~~~~$ & $G$-igMod$_{C_6}(U_2)=\Span{y^2z,xz^2,x^2y}\in V_2$, $G$-igMod$_{C_6}(U_4)=\Span{z^6,x^6,y^6}\in V_2$ & $V_2$ \\ 
\hline
\end{tabular}
\end{center}
\caption{Marking of the curves $C_i\subset\Hilb{G}{\C^3}$ for $i=1,\ldots,6$.}
\label{exa:T:GigMod}
\end{table}%
}

(d) Figure \ref{exa:T13:degrees} shows the degrees of the tautological bundles $\RR_0'$, $\RR_0''$, $\det(\RR_1)$, $\det(\RR_2)$, $\det(\RR_4)$ and $\det(\RR_7)$ at every rational curve of $\Hilb{G}{\C^3}$. From them we can say that since $\det(\RR_1)$ and $\det(\RR_2)$ have the same degree on every compact curve on $Y$ then $\det(\RR_1)=\det(\RR_2)$ in $\Pic Y$. Similarly, we can conclude that $\det(\RR_4)=\RR_0'\otimes\RR_0''$.

\begin{figure}[htbp]
\begin{center}

\begin{tikzpicture}

\node at (0,1.75) {$\RR_0'$};
\node at (0,3) {
\begin{tikzpicture}[scale=0.5]
\node (u1) [circle,fill=black,inner sep=1pt] at (0,1.5) {};
\node (u1) [circle,fill=black,inner sep=1pt] at (0,1.5) {};
\node (u2) [circle,fill=black,inner sep=1pt] at (2.5,3) {};
\node (u3) [circle,fill=black,inner sep=1pt] at (2.5,1.5) {};
\node (u4) [circle,fill=black,inner sep=1pt] at (5,1.5) {};
\node (u5) [circle,fill=black,inner sep=1pt] at (2.5,0) {};
\draw[-] (u1) to node[gap,inner sep=2pt] {\footnotesize$1$} (u2);
\draw[-] (u2) to node[gap,inner sep=2pt] {\footnotesize$0$} (u3);
\draw[-] (u3) to node[gap,inner sep=2pt] {\footnotesize$0$} (u5);
\draw[-] (u1) to node[gap,inner sep=2pt] {\footnotesize$4$} (u5);
\draw[-] (u2) to node[gap,inner sep=2pt] {\footnotesize$0$} (u4);
\draw[-] (u5) to node[gap,inner sep=2pt] {\footnotesize$0$} (u4);
\end{tikzpicture}
};

\node at (4,1.75) {$\RR_0''$};
\node at (4,3) {
\begin{tikzpicture}[scale=0.5]
\node (u1) [circle,fill=black,inner sep=1pt] at (0,1.5) {};
\node (u1) [circle,fill=black,inner sep=1pt] at (0,1.5) {};
\node (u2) [circle,fill=black,inner sep=1pt] at (2.5,3) {};
\node (u3) [circle,fill=black,inner sep=1pt] at (2.5,1.5) {};
\node (u4) [circle,fill=black,inner sep=1pt] at (5,1.5) {};
\node (u5) [circle,fill=black,inner sep=1pt] at (2.5,0) {};
\draw[-] (u1) to node[gap,inner sep=2pt] {\footnotesize$1$} (u2);
\draw[-] (u2) to node[gap,inner sep=2pt] {\footnotesize$0$} (u3);
\draw[-] (u3) to node[gap,inner sep=2pt] {\footnotesize$0$} (u5);
\draw[-] (u1) to node[gap,inner sep=2pt] {\footnotesize$4$} (u5);
\draw[-] (u2) to node[gap,inner sep=2pt] {\footnotesize$0$} (u4);
\draw[-] (u5) to node[gap,inner sep=2pt] {\footnotesize$0$} (u4);
\end{tikzpicture}
};

\node at (8,1.75) {$\det(\RR_1)$};
\node at (8,3) {
\begin{tikzpicture}[scale=0.5]
\node (u1) [circle,fill=black,inner sep=1pt] at (0,1.5) {};
\node (u1) [circle,fill=black,inner sep=1pt] at (0,1.5) {};
\node (u2) [circle,fill=black,inner sep=1pt] at (2.5,3) {};
\node (u3) [circle,fill=black,inner sep=1pt] at (2.5,1.5) {};
\node (u4) [circle,fill=black,inner sep=1pt] at (5,1.5) {};
\node (u5) [circle,fill=black,inner sep=1pt] at (2.5,0) {};
\draw[-] (u1) to node[gap,inner sep=2pt] {\footnotesize$0$} (u2);
\draw[-] (u2) to node[gap,inner sep=2pt] {\footnotesize$1$} (u3);
\draw[-] (u3) to node[gap,inner sep=2pt] {\footnotesize$0$} (u5);
\draw[-] (u1) to node[gap,inner sep=2pt] {\footnotesize$2$} (u5);
\draw[-] (u2) to node[gap,inner sep=2pt] {\footnotesize$1$} (u4);
\draw[-] (u5) to node[gap,inner sep=2pt] {\footnotesize$1$} (u4);
\end{tikzpicture}
};

\node at (0,-1.25) {$\det(\RR_2)$};
\node at (0,0) {
\begin{tikzpicture}[scale=0.5]
\node (u1) [circle,fill=black,inner sep=1pt] at (0,1.5) {};
\node (u1) [circle,fill=black,inner sep=1pt] at (0,1.5) {};
\node (u2) [circle,fill=black,inner sep=1pt] at (2.5,3) {};
\node (u3) [circle,fill=black,inner sep=1pt] at (2.5,1.5) {};
\node (u4) [circle,fill=black,inner sep=1pt] at (5,1.5) {};
\node (u5) [circle,fill=black,inner sep=1pt] at (2.5,0) {};
\draw[-] (u1) to node[gap,inner sep=2pt] {\footnotesize$0$} (u2);
\draw[-] (u2) to node[gap,inner sep=2pt] {\footnotesize$1$} (u3);
\draw[-] (u3) to node[gap,inner sep=2pt] {\footnotesize$0$} (u5);
\draw[-] (u1) to node[gap,inner sep=2pt] {\footnotesize$2$} (u5);
\draw[-] (u2) to node[gap,inner sep=2pt] {\footnotesize$1$} (u4);
\draw[-] (u5) to node[gap,inner sep=2pt] {\footnotesize$1$} (u4);
\end{tikzpicture}
};

\node at (4,-1.25) {$\det(\RR_4)$};
\node at (4,0) {
\begin{tikzpicture}[scale=0.5]
\node (u1) [circle,fill=black,inner sep=1pt] at (0,1.5) {};
\node (u1) [circle,fill=black,inner sep=1pt] at (0,1.5) {};
\node (u2) [circle,fill=black,inner sep=1pt] at (2.5,3) {};
\node (u3) [circle,fill=black,inner sep=1pt] at (2.5,1.5) {};
\node (u4) [circle,fill=black,inner sep=1pt] at (5,1.5) {};
\node (u5) [circle,fill=black,inner sep=1pt] at (2.5,0) {};
\draw[-] (u1) to node[gap,inner sep=2pt] {\footnotesize$2$} (u2);
\draw[-] (u2) to node[gap,inner sep=2pt] {\footnotesize$0$} (u3);
\draw[-] (u3) to node[gap,inner sep=2pt] {\footnotesize$0$} (u5);
\draw[-] (u1) to node[gap,inner sep=2pt] {\footnotesize$8$} (u5);
\draw[-] (u2) to node[gap,inner sep=2pt] {\footnotesize$0$} (u4);
\draw[-] (u5) to node[gap,inner sep=2pt] {\footnotesize$0$} (u4);
\end{tikzpicture}
};

\node at (8,-1.25) {$\det(\RR_7)$};
\node at (8,0) {
\begin{tikzpicture}[scale=0.5]
\node (u1) [circle,fill=black,inner sep=1pt] at (0,1.5) {};
\node (u1) [circle,fill=black,inner sep=1pt] at (0,1.5) {};
\node (u2) [circle,fill=black,inner sep=1pt] at (2.5,3) {};
\node (u3) [circle,fill=black,inner sep=1pt] at (2.5,1.5) {};
\node (u4) [circle,fill=black,inner sep=1pt] at (5,1.5) {};
\node (u5) [circle,fill=black,inner sep=1pt] at (2.5,0) {};
\draw[-] (u1) to node[gap,inner sep=2pt] {\footnotesize$1$} (u2);
\draw[-] (u2) to node[gap,inner sep=2pt] {\footnotesize$3$} (u3);
\draw[-] (u3) to node[gap,inner sep=2pt] {\footnotesize$1$} (u5);
\draw[-] (u1) to node[gap,inner sep=2pt] {\footnotesize$7$} (u5);
\draw[-] (u2) to node[gap,inner sep=2pt] {\footnotesize$2$} (u4);
\draw[-] (u5) to node[gap,inner sep=2pt] {\footnotesize$0$} (u4);
\end{tikzpicture}
};

\end{tikzpicture}
\vspace{-0.25cm}
\caption{Degrees of the bundles $\det(\RR_i)$ at curves $C\subset\Hilb{G}{\C^3}$.}
\label{exa:T13:degrees}
\end{center}
\end{figure}
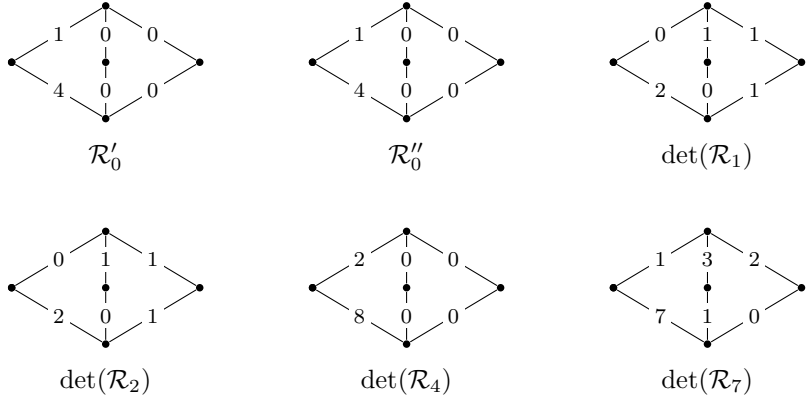

(e) $G$ has six nontrivial conjugacy classes, four of age 1 and two of age 2. This means that $e(Y)=|\Irr G| = 7 = h^0 + h^2 + h^4 = 1 + \rk\Pic Y + 2$, thus $\rk\Pic Y=4$. Using the relations in $(d)$ we see that $\RR_0'$, $\RR_0''$, $\det(\RR_2)$ and $\det(\RR_7)$ form a basis of $\Pic Y$, thus their first Chern classes for a basis of $H^2(Y,\Z)$. The cookery from Reid's recipe (see \cite{C2}) defines the following virtual bundles:
\[
\FF_{E_1}:=(\RR_{0}'\otimes\RR_{0}'')\ominus(\det\RR_4\otimes\OO_Y) \hspace{1cm} \FF_{E_2}:=\det\RR_2\ominus\det\RR_1
\]
which give a basis of $H^4(Y,\Z)$.
\end{proof}

%%%%%%%%%%%%%%%%%%%%%%%%%%%%%%%%%
% References
%%%%%%%%%%%%%%%%%%%%%%%%%%%%%%%%%

\end{document}